\newtheorem{theorem}{Theorem}[section]
\newtheorem{proposition}[theorem]{Proposition}
\theoremstyle{definition}
\newtheorem{definition}[theorem]{Definition}
\newtheorem{rem}[theorem]{Remark}
\newtheorem{example}[theorem]{Example}
\newtheorem{question}[theorem]{Question}
\date{}
\newcommand{\Irr}{\mathop{\mathrm{Irr}}}
\newcommand{\cd}{\mathop{\mathrm{cd}}}
\newcommand{\adim}{\mathop{\mathrm{adim}}}
\newcommand{\diam}{\mathop{\mathrm{diam}}}
\begin{document}
\title[Metric dimension of character degree graphs]{On the metric dimension of the character degree graph of a solvable group}

\author [Cameron]{Peter J. Cameron}
\address{School of Mathematics and Statistics, University of St Andrews, North Haugh St Andrews, Fife, KY16 9SS, U.K.,ORCID:0000-0003-3130-9505}
\email{pjc20@st-andrews.ac.uk}
\author [Sivanesan]{G. Sivanesan}
\address{Department of Mathematics, Government College of Engineering, Salem 636011, Tamil Nadu, India, ORCID:0000-0001-7153-960X.}
\email{sivanesan@gcesalem.edu.in}
\author [Selvaraj]{C. Selvaraj}
\address{Department of Mathematics, Periyar University, Salem 636011, Tamil Nadu, India, ORCID:0000-0002-4050-3177.}
\email{selvavlr@yahoo.com}
\author[Tamizh Chelvam]{T. Tamizh Chelvam}
\address{Department of Mathematics, Manonmaniam Sundaranar University, 
Tirunelveli 627 012, Tamil Nadu, India, ORCID:0000-0002-1878-7847.}
\email{tamche59@gmail.com}
\author[Laubacher]{Jacob Laubacher}
\address{Department of Mathematics, St. Norbert College, De Pere, WI 54115, ORCID:0000-0003-0045-7951.}
\email{jacob.laubacher@snc.edu}

\keywords{character graph;  regular graph; finite solvable group; metric dimension; twin vertices; base; adjacency dimension}
\subjclass[2010]{05C12, 20C15, 05C25}
 
\begin{abstract}
Let $G$ be a finite solvable group and let $\Delta(G)$ be the character degree graph of $G$. In this paper, we obtain the metric dimension of certain character degree graphs. Specifically, we calculate the metric dimension for a regular character degree graph, a character degree graph with a diameter of $2$ that is not a block, a character degree graph with a diameter of $3$ that also has a cut vertex and a character degree graph with Fitting height $2.$ We also consider two related parameters, base size and adjacency dimension, and their relation to metric dimension for character degree graphs of solvable groups.
\end{abstract}
\date{}
\maketitle
 
\section{Introduction}
Throughout this article, we let $G$ be a finite solvable group with identity $1$. We fix $\Irr(G)$ to be the set of all irreducible characters of $G$ and $\cd(G)=\{\chi(1)\mid\chi\in \Irr(G)\}$. Letting $\rho(G)$ be the set of all primes that divide degrees in $\cd(G)$, one can then define the character degree graph of $G$, denoted by $\Delta(G)$. This simple, undirected graph has the vertex set $\rho(G)$, and there is an edge between two  distinct primes $p$ and $q$ if $pq$ divides some degree $a\in\cd(G)$. This graph $\Delta(G)$ was first defined by Manz \textit{et al.}\ in \cite{Manz2}, and a broad overview of character degree graphs can be found in \cite{Lewis5}. Recently, a lot of work has been done concerning these graphs $\Delta(G)$. For example, Manz proved that the graph contains at most two connected components in \cite{Manz1}, and furthermore, Wolf \textit{et al.}\ in \cite{Manz3} prove that the sum of the diameters of the components of $\Delta(G)$ is at most $3$. Classifying whether a certain graph occurs as the prime character degree graph of a solvable group has been of interest (see \cite{BissLaub, LJ, LM}, for example), and in \cite{EI} Ebrahimi et al. proved that the character degree graph $\Delta(G)$ of a solvable group $G$ is Hamiltonian if and only if $\Delta(G)$ contains a block with at least three vertices. Finally, Sivanesan et al. in \cite{SST} provide a necessary condition for the character degree graph of a finite solvable group to be Eulerian.

(We note that, in some of the literature on this subject, the character degree graph is denoted by $\Gamma(G)$, while $\Delta(G)$ denotes the graph whose vertex set is $\cd(G)$, two vertices adjacent if they have a common prime divisor.)

In recent decades, a considerable amount of research has been done on the metric dimension of graphs. In 1976, Harary and Melter were the first to study the problem of finding a graph's metric dimension (see \cite{Har}). The paper~\cite{BC} gives a summary of metric dimension and related notions such as determining number. More recently, Pirzada and Raja examine the metric dimension of zero-divisor graphs in \cite{PSR}. The purpose of this paper is to investigate the metric dimension of the character degree graph $\Delta(G)$ for some solvable group $G$.

In Section \ref{SecPre}, we provide all the necessary background information, attempting to keep this paper as self-contained as possible. We recall all the needed foundations of basic graph theory, including diameter and dimension, among others. In Section \ref{SecMet}, we present our main results, along with accompanying examples for further illustration. In Section \ref{FitHei}, we discuss the character degree graphs of finite groups with Fitting height 2 using Lewis partition. The final section gives a brief discussion of two related concepts, base size and adjacency dimension, and study their relation to metric dimension for character degree graphs of solvable groups. Depending on certain properties of the character degree graph $\Delta(G)$, we prove that we can calculate the metric dimension. In particular, we obtain the metric dimension for when the character degree graph: is regular (Theorem \ref{3.1}), has a diameter of $2$ but is not a block (Theorem \ref{3.2}), has a diameter of $3$ but contains a cut vertex (Theorem \ref{3.5}) and has Fitting height $2$ (Theorem \ref{3.4}). 

\section{Preliminaries}\label{SecPre}
Throughout this paper, when we refer to a graph $\Gamma$, we mean a simple, undirected finite graph $\Gamma$ with vertex set $V(\Gamma)$ and edge set $E(\Gamma)$. A clique in $\Gamma$ is a complete induced subgraph of $\Gamma$. We will also commonly refer to the graph $K_n$, which is the complete graph on $n$ vertices, and the graph $C_n$, which is the cycle graph on $n$ vertices. For all other foundational graph theoretic concepts, we refer the reader to \cite{BM}, and we refer to \cite{HALL} for basic notations and terminology in group theory. For less commonly known definitions and results, we present here a short overview.


\begin{definition}
A \textbf{cut vertex} of a graph $\Gamma$ is a vertex whose removal increases the number of components.
\end{definition}

\begin{definition}
A connected nontrivial graph having no cut vertex is called a \textbf{block}.
\end{definition}

It is then natural to view blocks as subgraphs of a given graph $\Gamma$. It is well known that any two blocks of $\Gamma$ have at most one vertex in common.

\begin{theorem}\cite[Lemma 2.7]{EI}\label{2.1}
Let $G$ be a group with $|\rho(G)|\geq 3$.  If $\Delta(G)$ is not a block and the diameter of $\Delta(G)$ is at most $2$, then each block of $\Delta(G)$ is a complete graph.
\end{theorem}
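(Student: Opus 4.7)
The strategy is to combine Pálfy's three-prime theorem (for a solvable group $G$, the graph $\Delta(G)$ contains no independent set of three vertices) with the structural consequences of the diameter hypothesis. Since diameter at most $2$ forces $\Delta(G)$ to be connected, and since a graph of diameter $1$ is complete (hence a block), the hypothesis actually forces diameter exactly $2$, with at least one cut vertex $v$. I would begin the proof by fixing such a $v$ and letting $A_1, \ldots, A_k$ (with $k \geq 2$) denote the components of $\Delta(G) - v$.

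The first substantive step is to observe that $v$ is a \emph{universal vertex}. Indeed, for any $a \in A_i$ and $b \in A_j$ with $i \neq j$, the diameter-$2$ condition gives a path of length $2$ from $a$ to $b$, and the only vertex this path can pass through is the separator $v$; hence $va, vb \in E(\Delta(G))$. Varying $a$ and $b$ shows that $v$ is adjacent to every other vertex of $\Delta(G)$.

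Next I fix an arbitrary block $B$ of $\Delta(G)$ and argue that $B$ is complete. The case $|B| \leq 2$ is immediate, so suppose $|B| \geq 3$ and, for contradiction, choose distinct non-adjacent $u, w \in B$. If either vertex equals $v$, universality of $v$ forces the edge and gives a contradiction, so $u, w \neq v$. Since $B$ is $2$-connected, $B \setminus \{v\}$ is connected in $\Delta(G) - v$, so it lies inside a single component $A_t$. Using $k \geq 2$ (and the hypothesis $|\rho(G)| \geq 3$ to guarantee there is room for the argument), pick any $z \in A_s$ with $s \neq t$. Then $uz, wz \notin E(\Delta(G))$, since edges between $A_t$ and $A_s$ would contradict their lying in distinct components of $\Delta(G) - v$. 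Combined with $uw \notin E(\Delta(G))$, the set $\{u, w, z\}$ is an independent set of size $3$, contradicting Pálfy's theorem.

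The main obstacle, in my view, is not any calculation but the conceptual step of invoking Pálfy's three-prime theorem at the correct moment; once the universal-vertex observation is in place, handling the two subcases ($v \in B$ versus $v \notin B$) reduces to producing a witness $z$ in a different component, which is why $k \geq 2$ is essential. The solvability of $G$ enters the argument only through Pálfy's theorem, and the diameter hypothesis is what links the graph-theoretic cut vertex to a universal vertex.
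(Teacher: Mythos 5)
Your argument is correct and essentially self-contained, but note that the paper you are working from does not prove this statement at all: it is quoted verbatim from Ebrahimi, Iranmanesh and Hosseinzadeh \cite[Lemma 2.7]{EI}, so there is no in-paper proof to compare against. Judged on its own terms, your proof is sound. The chain diameter $\le 2$ $\Rightarrow$ connected, not a block $\Rightarrow$ cut vertex $v$ exists, and the observation that any path of length $2$ between vertices in distinct components of $\Delta(G)-v$ must pass through $v$ (so $v$ is universal) are all valid; the reduction of a non-complete block $B$ with $|B|\ge 3$ to an independent triple $\{u,w,z\}$ via the fact that $B\setminus\{v\}$ is connected in $\Delta(G)-v$ is also correct, and P\'alfy's three-prime theorem then finishes the job. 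Two small remarks. First, the statement as transcribed says ``group'' rather than ``solvable group''; your proof genuinely needs solvability (P\'alfy's theorem fails for arbitrary finite groups), so you are implicitly relying on the paper's standing assumption that $G$ is solvable --- which is also the setting of the cited source, so this is fine but worth flagging. Second, the hypothesis $|\rho(G)|\ge 3$ is not really what guarantees the existence of the witness $z$; the nonemptiness of a second component of $\Delta(G)-v$ does that on its own, and $|\rho(G)|\ge 3$ merely rules out degenerate graphs where ``not a block'' is vacuous or trivial. Neither point affects the validity of the argument.
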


Next, we look at the diameter of a graph. For a finite solvable group $G$, it is well established that $\diam(\Delta(G))\leq 3$ (see \cite[Theorem 3.2]{Manz3}). Lewis proved in \cite{Lewis2} that in order for $\diam(\Delta(G))=3$, then we must have that $|\rho(G)|\geq6$. In fact, he provided an example of such a group whose graph has six vertices in \cite{Lewis3}.

\begin{theorem}\cite[Theorem 10]{HHH}\label{2.5} 
If $G$ is a solvable group and the character degree graph $\Delta(G)$ is connected, then $\Delta(G)$ has at most one cut vertex. In particular, if  $v$ is the unique cut vertex of $\Delta(G)$, then $\diam(\Delta(G))=3$ if and only if $\Delta(G)$ has one of the following structures:
\begin{itemize}
\item[\rm (1)]$\Delta(G)=w-v-K_s$, for some $w\in\rho(G)$ and some integer $s\ge4$ such that $V(K_s)\cup\{v\}$ does not generate a clique.
\item[\rm (2)] $\Delta(G)=K_m-v-K_s$, for some integers $m\ge8$ and $ s\ge2$, where $V(K_s)\cup\{v\}$ generates a clique and $1< |N(v)|\cap V(K_m)\mid<m$.
\end{itemize}
Furthermore, in both structures $(1)$ and $(2)$, $deg(v)\ge4$.
\end{theorem}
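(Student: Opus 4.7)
The plan is to treat the two assertions in turn: first the uniqueness of the cut vertex when $\Delta(G)$ is connected, then the structural classification when $\diam(\Delta(G))=3$.

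The engine of the whole argument is P\'alfy's three-prime condition for solvable groups, which forbids any independent set of size three in $\Delta(G)$. For the uniqueness claim, I would analyze the block-cut tree $T$ of $\Delta(G)$. If $T$ had three or more leaves (end blocks), one could pick a non-cut vertex $a_i$ from each of three distinct end blocks $B_i$; since a non-cut vertex of $B_i$ has all of its neighbours inside $B_i$ and distinct end blocks share no vertices besides their cut vertices, the triple $\{a_1,a_2,a_3\}$ would be independent, contradicting P\'alfy. So $T$ must be a path. When $\diam(\Delta(G))\le 2$, Theorem \ref{2.1} forces every block of $\Delta(G)$ to be complete, and a direct distance calculation between non-cut vertices of the two extreme blocks of a path with at least two cut vertices then produces distance at least three, a contradiction. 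The remaining case $\diam(\Delta(G))=3$ is handled by the classification below.

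For the diameter-three classification, fix $u,w$ with $d(u,w)=3$. P\'alfy forces every other vertex to be adjacent to at least one of $u,w$, and no vertex can be adjacent to both (else $u,w$ would be at distance at most two). Writing $A=N(u)\setminus\{w\}$ and $B=N(w)\setminus\{u\}$, a second application of P\'alfy to triples $\{a_1,a_2,w\}$ and $\{b_1,b_2,u\}$ shows that $A$ and $B$ are cliques, so $\{u\}\cup A$ and $\{w\}\cup B$ are cliques joined by a nonempty set of edges running between $A$ and $B$. The unique cut vertex $v$ must therefore lie on one side and be the sole vertex there providing cross-edges. If $A$ collapses to a single vertex, the remaining structure is the pendant-plus-clique of item (1); otherwise it is the two-clique structure of item (2). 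The numerical refinements $s\ge 4$ (forced by $|\rho(G)|\ge 6$), $m\ge 8$, and the side condition on $V(K_s)\cup\{v\}$ generating a clique come from combining this combinatorial picture with the deeper character-theoretic bounds of Lewis on diameter-three character graphs and on the admissible local structure at a cut vertex.

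The converse direction is immediate: each structure exhibits two vertices at distance three (the pendant $w$ and any vertex of $K_s\setminus N(v)$ in (1); a vertex of $K_s$ and a vertex of $K_m\setminus N(v)$ in (2)), forcing $\diam(\Delta(G))=3$. The bound $\deg(v)\ge 4$ follows directly in case (2) from $|V(K_s)|\ge 2$ and $|N(v)\cap V(K_m)|\ge 2$; in case (1) it strengthens the basic cut-vertex requirement $|N(v)\cap V(K_s)|\ge 2$ to $|N(v)\cap V(K_s)|\ge 3$, and is a character-theoretic refinement coming from Lewis's local analysis. The main obstacle throughout is precisely this character-theoretic input: P\'alfy and the diameter bound alone do not rule out configurations such as two cliques joined by a single bridge edge (both endpoints cut vertices), nor do they produce the refined bounds $s\ge 4$, $m\ge 8$, and $\deg(v)\ge 4$; eliminating those possibilities requires Lewis's deeper work on diameter-three character degree graphs of solvable groups.
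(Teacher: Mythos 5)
This statement is quoted verbatim from Hafezieh \emph{et al.}\ (\cite[Theorem~10]{HHH}); the paper gives no proof of it, so there is nothing internal to compare your argument against. Judged on its own terms, your outline gets the purely combinatorial skeleton right: P\'alfy's three-prime condition does force the two-clique decomposition $\{u\}\cup A$, $\{w\}\cup B$ with cross-edges only between $A$ and $B$ (this is exactly the Sass partition recalled in Remark~\ref{rem2.1}), the block--cut-tree argument for $\diam\le 2$ is sound, and the converse direction (each listed structure has diameter $3$) is indeed immediate.

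The genuine gap is that everything distinguishing this theorem from the generic two-clique picture is deferred rather than proved. The bound $m\ge 8$ in structure (2), the bound $\deg(v)\ge 4$, the exclusion of the configuration in which both cliques are joined by a single bridge (which, as you correctly note, is consistent with P\'alfy and with $\diam=3$ and would give \emph{two} cut vertices, contradicting the very first assertion of the theorem), and the dichotomy over whether $V(K_s)\cup\{v\}$ generates a clique are all attributed to ``Lewis's deeper work'' or ``character-theoretic refinements'' without identifying which results are used or how. These exclusions are the actual content of \cite[Theorem~10]{HHH}; the combinatorial reduction you carry out is the easy part and is already contained in Remark~\ref{rem2.1} and Theorem~\ref{2.3}. (One smaller point: your derivation of $s\ge4$ in case (1) from $|\rho(G)|\ge6$ is fine, but note that $|\rho(G)|\ge6$ for diameter~$3$ is itself a nontrivial theorem of Lewis that must be invoked explicitly.) As written, the proposal is an annotated statement of where the difficulties lie rather than a proof of the theorem.
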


\begin{rem}\label{rem2.1}
When $\diam(\Delta(G))=3$, we can partition the vertex set $\rho(G)$ into four disjoint nonempty sets. This was formally done by Sass in \cite{Sass}, where $\rho(G)=\rho_1\cup\rho_2\cup\rho_3\cup\rho_4$ under the following rules: since $\diam(G)=3$, then there must exist vertices $r,s\in\rho(G)$ such that the distance between $r$ and $s$ is $3$. So we denote $\rho_4$ to be the set of all vertices of $\Delta(G)$ which are at distance $3$ from the vertex $r$. As consequence, we know that $s\in \rho_4$. Next, $\rho_3$ is  the set of all vertices of $\Delta(G)$ which are distance $2$ from the vertex $r$. It is established that $|\rho_3|\geq3$, for instance (again, see \cite{Sass}). Then we have that $\rho_2$ is the set of all vertices that are adjacent to the vertex $r$ and adjacent to some prime in $\rho_3$, whereas $\rho_1$ consists of $r$ and the set of all vertices which are adjacent to $r$ and not adjacent to any vertex in $\rho_3.$

The above implies that no vertex in $\rho_1$ is adjacent to any vertex in $\rho_3\cup\rho_4$, and no vertex in $\rho_4$ is adjacent to any vertex in $\rho_1\cup\rho_2$. Furthermore, every vertex in $\rho_2$ is adjacent to some vertex in $\rho_3$ and vice versa, and $\rho_1\cup\rho_2$ and $\rho_3\cup\rho_4$ both induce complete subgraphs of $\Delta(G)$.
\end{rem}

Using this, we are now ready for the following recent result by Lewis and Meng:

\begin{theorem}\cite[Lemma 2.1]{Lewis6}\label{2.3}
Let $G$ be a solvable group and assume that $\Delta(G)$ has diameter $3$. Then $G$ is $1$-connected if and only if $|\rho_2|=1$ in the diameter $3$ partition of  $\rho(G)$. In this case, if $p$ is the unique prime in $\rho_2$, then $p$ is also the unique cut vertex for $\Delta(G)$. In particular, $\Delta(G)$ has at most one cut vertex.
\end{theorem}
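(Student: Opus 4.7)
The plan is to work entirely inside the diameter-$3$ partition $\rho(G)=\rho_1\cup\rho_2\cup\rho_3\cup\rho_4$ recalled in Remark~\ref{rem2.1} and exploit its two key structural features: first, that $\rho_1\cup\rho_2$ and $\rho_3\cup\rho_4$ each induce a clique; second, that \emph{every} edge between $\{\rho_1,\rho_2\}$ and $\{\rho_3,\rho_4\}$ goes from $\rho_2$ to $\rho_3$. Combined with the nonemptiness of every $\rho_i$ (in particular $|\rho_3|\ge 3$), these facts make the cut-vertex analysis essentially a counting exercise in case work.

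For the forward direction, suppose $|\rho_2|=1$ and write $\rho_2=\{p\}$. After deleting $p$, the remaining vertex set is partitioned into $\rho_1$ and $\rho_3\cup\rho_4$, both nonempty. Every edge previously joining these two blocks passed through $p$ (any $\rho_1$–$\rho_3$ or $\rho_1$–$\rho_4$ edge is forbidden, and any $\rho_2$–$\rho_3$ edge has been destroyed), so the graph becomes disconnected. Hence $p$ is a cut vertex of $\Delta(G)$, and by Theorem~\ref{2.5} it is the unique one.

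For the reverse direction, suppose $|\rho_2|\ge 2$; I will show that no vertex $v$ of $\Delta(G)$ can be a cut vertex, arguing by which block $v$ lies in. If $v\in\rho_1$, then $\rho_2$ survives, the clique $\rho_1\cup\rho_2$ minus one vertex is still connected, and any guaranteed $\rho_2$–$\rho_3$ edge links it to the clique $\rho_3\cup\rho_4$. If $v\in\rho_2$, pick $p'\in\rho_2\setminus\{v\}$; being in the clique $\rho_1\cup\rho_2$, the vertex $p'$ sees all of $\rho_1\cup(\rho_2\setminus\{v\})$, and having some neighbour in $\rho_3$ it reaches the clique $\rho_3\cup\rho_4$. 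The case $v\in\rho_4$ is symmetric to $v\in\rho_1$.

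The one genuinely delicate case is $v\in\rho_3$: we must ensure that an edge from $\rho_2$ into $\rho_3\setminus\{v\}$ still exists. Here the ``vice versa'' clause of Remark~\ref{rem2.1} is exactly what is needed: pick any $q'\in\rho_3\setminus\{v\}$ (possible because $|\rho_3|\ge 3$), and by that clause $q'$ has some neighbour in $\rho_2$, which supplies the required bridge between the two surviving cliques $\rho_1\cup\rho_2$ and $(\rho_3\setminus\{v\})\cup\rho_4$. This completes the biconditional, and the ``in particular'' clause follows immediately, since the cut vertex produced in the forward direction is $p\in\rho_2$ and Theorem~\ref{2.5} already guarantees at most one.
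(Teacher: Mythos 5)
This statement is quoted from Lewis and Meng \cite[Lemma 2.1]{Lewis6}; the paper itself supplies no proof, so there is nothing internal to compare your argument against. Taken on its own terms, your proof is correct and complete: both directions reduce to the four structural facts recorded in Remark~\ref{rem2.1} (the two cliques $\rho_1\cup\rho_2$ and $\rho_3\cup\rho_4$, the prohibition on $\rho_1$--$(\rho_3\cup\rho_4)$ and $\rho_4$--$(\rho_1\cup\rho_2)$ edges, and the ``every vertex of $\rho_2$ meets $\rho_3$ and vice versa'' clause), together with the nonemptiness of each $\rho_i$. You correctly identify $v\in\rho_3$ as the only delicate deletion and handle it with the ``vice versa'' clause plus $|\rho_3|\ge 3$. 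One small remark: your appeal to Theorem~\ref{2.5} for uniqueness of the cut vertex is unnecessary and slightly weakens the self-containedness of the argument, since Theorem~\ref{2.5} already asserts the ``at most one cut vertex'' conclusion you are trying to derive. Your own case analysis for $v\in\rho_1$, $v\in\rho_3$ and $v\in\rho_4$ never uses the hypothesis $|\rho_2|\ge 2$, so it already shows that any cut vertex must lie in $\rho_2$; combined with the forward direction this gives uniqueness of the cut vertex, and hence the ``in particular'' clause, with no external input.
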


\begin{definition}
The \textbf{connectivity}  $\kappa=\kappa(\Gamma)$ of a graph $\Gamma$ is the minimum number of vertices whose removal gives a disconnected or trivial graph.
\end{definition}

Further graph-theoretic background we need concerns twin vertices; this will be
discussed in the final section of the paper.

\begin{definition}
For a subset $W= \{v_1,v_2,\ldots,v_k\}\subseteq V(\Gamma)$ and for the vertex $v\in V(\Gamma)$,  the vector $r(v\mid W)=(d(v,v_1), d(v,v_2),\ldots,d(v,v_k))$ is the \textbf{metric $W$-representation} of $v$.
\end{definition}

A subset  $W\subseteq V(\Gamma)$ is said to be a metric generator for $\Gamma$ if all the vertices of $\Gamma$ have pairwise different metric $W$-representations. The metric basis for $\Gamma$ is a metric generator of the smallest order, whose order is denoted as dim$(\Gamma)$. This is the focus of our work in this paper. Chartrand et al. in \cite{CE} proved that dim$(\Gamma)=1$ if and only if $\Gamma=P_n,$ the path with $n$ vertices. Furthermore, dim$(\Gamma)=2$ if and only if $\Gamma=C_n$, and dim$(\Gamma)=n-1$ if and only if $\Gamma=K_n$ for all $n\geq2$.

\begin{theorem}\cite[Theorem 4]{CE}\label{2.6}
Let $G$ be a connected graph of order $n \ge 4.$ Then $dim(G)=n-2$ if and only if  $G = K_{s,t} (s, t \ge 1)$, $G = K_s +$  $\overline{K_t}$ $(s\ge 1, t \ge 2)$ or $G = K_s +(K_1 \cup K_t)$ $(s,t \ge 1).$
\end{theorem}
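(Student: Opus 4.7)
The plan is to prove both directions separately. For sufficiency, I identify the twin structure of each candidate graph: in $K_{s,t}$ the two parts are open-twin classes; in $K_s+\overline{K_t}$ the $K_s$-vertices form a closed-twin class while the $\overline{K_t}$-vertices form an open-twin class; and in $K_s+(K_1\cup K_t)$ the $K_s$-vertices and the $K_t$-vertices form two separate closed-twin cliques, with the $K_1$-vertex as a singleton. Since twins are distance-indistinguishable from every outside vertex, any resolving set must miss at most one vertex per twin class, yielding $\dim(G)\ge n-2$ in each case; an explicit resolving set of size $n-2$ obtained by deleting one representative from each of the two large twin classes, with the lone $K_1$-vertex playing the role of the needed distinguisher in the third family, achieves equality.

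For necessity, suppose $\dim(G)=n-2$. First I show $\diam(G)\le 2$: if a geodesic $u_0u_1u_2u_3$ of length $3$ existed, then $V(G)\setminus\{u_1,u_2,u_3\}$ would be a resolving set of size $n-3$, because $u_0$ lies in this set and realizes the three distinct distances $1,2,3$ to $u_1,u_2,u_3$, hence distinguishes every pair inside $\{u_1,u_2,u_3\}$, contradicting $\dim(G)=n-2$. With $\diam(G)\le 2$, distances take values in $\{1,2\}$, so $w\notin\{x,y\}$ distinguishes $x,y$ iff $w\in N(x)\triangle N(y)$, and two vertices are indistinguishable from outside iff they are \emph{twins}, meaning $N[x]=N[y]$ (closed twins, with $x\sim y$) or $N(x)=N(y)$ (open twins, with $x\not\sim y$). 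A direct check shows twinhood is an equivalence relation whose classes are cliques, independent sets, or singletons, and the adjacency between any two distinct classes is uniform, being either a complete join or no edges at all, since twins share their external neighbors.

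Let $T_1,\dots,T_k$ denote the twin classes; any resolving set contains all but one vertex of each $T_i$, so $\dim(G)\ge n-k$, forcing $k\ge 2$. If $k=2$, connectedness forces the two classes to be fully joined, and the possible class-type combinations (open/open, closed/open) yield exactly $K_{s,t}$ or $K_s+\overline{K_t}$; the closed/closed case collapses to $K_n$ with $k=1$, a contradiction. If $k\ge 3$, the binding constraint is the failure of every size-$(n-3)$ resolving set: picking one representative from each of any three classes must produce a triple containing an unresolvable pair, which via the criterion $N(\cdot)\triangle N(\cdot)\subseteq\{a,b,c\}$ severely restricts inter-class adjacency. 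A case check on class types, combined with connectedness and $\diam(G)\le 2$, rules out $k\ge 4$ and isolates $G=K_s+(K_1\cup K_t)$ as the sole surviving configuration. The main obstacle is precisely this last step: translating the global non-resolvability condition into the prescribed join structure and eliminating all exotic $k\ge 3$ configurations.
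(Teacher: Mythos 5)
The paper does not actually prove this statement; it is quoted verbatim from Chartrand, Eroh, Johnson and Oellermann \cite[Theorem~4]{CE}, so there is no internal proof to measure you against and your attempt must stand on its own. It does not, for two reasons. First, your sufficiency argument is wrong for the third family: in $G=K_s+(K_1\cup K_t)$ there are \emph{three} twin classes (the $s$ universal vertices, the $t$ vertices of $K_t$, and the singleton from $K_1$), so the ``all but one vertex per twin class'' bound yields only $\dim(G)\ge n-3$, not $n-2$ as you claim ``in each case.'' The correct lower bound needs one more step: any $(n-3)$-set $W$ must omit exactly one vertex $u$ from $K_s$, one vertex $v$ from $K_t$, and the $K_1$-vertex, and then $W\subseteq V(K_s)\cup V(K_t)$ forces $r(u\mid W)$ and $r(v\mid W)$ to both be the all-ones vector, so $W$ does not resolve. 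Without this observation the claimed dimension $n-2$ for that family is not established.

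Second, and more seriously, the necessity direction is incomplete exactly where the theorem is hard. Your reduction to $\diam(G)\le 2$ is correct (the geodesic argument is the standard proof that $\dim(G)\le n-\diam(G)$), and the $k=2$ analysis via uniform adjacency between twin classes is fine. But for $k\ge 3$ you only describe a strategy --- ``picking one representative from each of any three classes must produce a triple containing an unresolvable pair \ldots\ a case check \ldots\ rules out $k\ge 4$ and isolates $K_s+(K_1\cup K_t)$'' --- and you explicitly concede that ``the main obstacle is precisely this last step.'' That case analysis (showing that the non-resolvability of every $(n-3)$-subset forces all but at most one twin class to be trivial in a controlled way, and pinning down the join structure) is the substantive content of the original proof; announcing it as an obstacle is not the same as carrying it out. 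As written, the argument establishes neither that the three listed families exhaust the $k\ge 3$ possibilities nor, per the first point, that all three families actually attain dimension $n-2$.
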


Finally, we recall a result about regular graphs, which we use in Section \ref{SecMet}.

\begin{theorem}\cite[Theorem A]{MZ}\label{2.2}
If $\Delta(G)$ is a non-complete and regular character degree graph of a finite solvable group $G$ with $n$ vertices, then $\Delta(G)$ is a $(n-2)$-regular graph.
\end{theorem}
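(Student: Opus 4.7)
The plan is to show that a non-complete regular character degree graph of a solvable group must in fact be missing very few edges. The key tool will be P\'alfy's three-primes condition for solvable groups: among any three vertices of $\Delta(G)$ at least two are joined, or equivalently $\alpha(\Delta(G))\le 2$ and $\overline{\Delta(G)}$ is triangle-free. Throughout I write $k$ for the regular degree, $n=|\rho(G)|$ and $d=n-1-k$ for the degree of the complement; the statement to prove is $d=1$, and for contradiction I would assume $d\ge 2$.

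First I would dispose of the disconnected case. If $\Delta(G)$ has two components $A,B$ (by Manz's theorem there can be no more), then any independent set in $A$ together with a single vertex of $B$ is independent in $\Delta(G)$, so P\'alfy forces $\alpha(A)\le 1$; hence $A$ is a clique, similarly for $B$, and $k$-regularity yields $\Delta(G)\cong K_{k+1}\sqcup K_{k+1}$. To exclude this for $k\ge 1$ I would invoke a structural input on disconnected character degree graphs of solvable groups, thereby forcing $d=1$ in this regime.

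For the connected case I would split on the diameter. If $\diam(\Delta(G))=3$, Theorem \ref{2.5} pins $\Delta(G)$ down to one of the explicit shapes $w-v-K_s$ or $K_m-v-K_s$; in the first, the pendant $w$ has degree $1$ while $v$ has degree $s+1\ge 5$, and in the second the condition $1<|N(v)\cap V(K_m)|<m$ creates vertices of degree $m-1$ and $m$ inside $K_m$, so neither structure is regular. If $\diam(\Delta(G))\le 2$ and $\Delta(G)$ is not a block, Theorem \ref{2.1} says every block is complete, and a standard degree count at a cut vertex $v$ (its two blocks $B_1,B_2$ being cliques, regularity forces $|B_1|=|B_2|$, whence $\deg(v)=2(|B_1|-1)>|B_1|-1$) already produces a contradiction.

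This leaves the genuine case: $\Delta(G)$ is a $2$-connected block of diameter at most $2$ which is $k$-regular with $d\ge 2$. I would analyse it through non-neighborhoods: for each vertex $v$, its set of non-neighbors has size $d$ and, by triangle-freeness of the complement, must induce a clique (otherwise three pairwise non-adjacent vertices would violate P\'alfy). Applying this to every vertex organises $\Delta(G)$ into a very rigid shape, and I would then try to derive a contradiction by combining this structural picture with $\diam(\Delta(G))\le 2$ and a deeper forbidden-configuration result for character degree graphs of solvable groups. This last step is the main obstacle, since P\'alfy and the diameter bounds on their own do not suffice: $d$-regular triangle-free graphs with $d\ge 2$ certainly exist abstractly (for instance complements of cycles), so the proof must ultimately appeal to sharper character-theoretic input of the kind exploited in \cite{MZ}.
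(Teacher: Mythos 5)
There is a genuine gap, and you acknowledge it yourself: the entire content of the theorem lives in the case you label ``the genuine case'' (a $2$-connected, diameter-at-most-$2$, $k$-regular graph whose complement is $d$-regular with $d\ge 2$), and for that case you offer no argument, only the observation that one ``must ultimately appeal to sharper character-theoretic input of the kind exploited in \cite{MZ}.'' That is exactly right, which is why the proposal does not constitute a proof. The statement is quoted in the paper directly from \cite{MZ} with no proof supplied; Morresi Zuccari's actual argument is a substantial character-theoretic analysis (normal structure, P\'alfy's condition applied to subgroups and quotients, not just to $\Delta(G)$ itself), and none of that is reconstructed here. Your own example of complements of cycles shows that no purely graph-theoretic combination of P\'alfy's three-prime condition, the diameter bound, and regularity can close the argument, so the reduction you perform, while sensible, leaves the theorem essentially unproved.

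Two smaller points in the reductions also need repair. First, your diameter-$3$ case leans on Theorem~\ref{2.5}, but that theorem classifies the structure only under the hypothesis that $\Delta(G)$ has a cut vertex; a diameter-$3$ character degree graph need not have one, so the case is not ``pinned down'' as claimed. (A correct elementary route here is the Sass partition of Remark~\ref{rem2.1}: a vertex of $\rho_1$ and a vertex of $\rho_4$ lie in disjoint cliques $\rho_1\cup\rho_2$ and $\rho_3\cup\rho_4$ with $|\rho_2|,|\rho_3|$ constrained, and comparing degrees across the partition contradicts regularity without assuming a cut vertex.) Second, in the disconnected case you again defer to an unspecified ``structural input on disconnected character degree graphs of solvable groups'' to rule out $K_{k+1}\sqcup K_{k+1}$ for $k\ge 1$; that is another external appeal left unexecuted. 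In short, the proposal is an honest outline of where the difficulty sits, but the central case and two of the peripheral cases all terminate in citations to results you do not prove, so the attempt cannot be accepted as a proof of Theorem~\ref{2.2}.
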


The related concepts of base and adjacency resolving set, and the corresponding
dimensions, are defined in the final section.

\section{Metric Dimension of  Character Degree Graphs}\label{SecMet}
Three of the authors of this paper earlier proved that, for $n\geq4$, every $(n-2)$-regular graph is the character degree graph $\Delta(G)$ for some solvable group $G$ (see \cite{SST}). In the result that follows, we obtain the metric dimension of $(n-2)$-regular character degree  graphs. 

\begin{theorem}\label{3.1}
Let $G$ be a solvable group of order $n\geq4$ and $n$ even. Assume that the  character degree graph $\Delta(G)$ is  non-complete and regular. Then the metric dimension of  $\Delta(G)$ is $\frac{n}{2}$.
\end{theorem}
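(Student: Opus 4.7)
The plan is to first identify the isomorphism type of $\Delta(G)$. By Theorem~\ref{2.2}, a non-complete regular character degree graph on $n$ vertices is $(n-2)$-regular, so every vertex has exactly one non-neighbor. The complement $\overline{\Delta(G)}$ is therefore $1$-regular, and since $n$ is even, $\overline{\Delta(G)}$ is a perfect matching. Hence $\Delta(G)$ is the cocktail party graph whose vertex set partitions into $n/2$ pairs $\{a_i,b_i\}$, where two distinct vertices are adjacent iff they lie in different pairs. The distance function is then trivial to read off: $d(u,v)=2$ when $u,v$ are pair-mates and $d(u,v)=1$ otherwise.

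Next, I would establish a lower bound on $\dim(\Delta(G))$. For any $u \notin W$, every coordinate of $r(u\mid W)$ lies in $\{1,2\}$, and the coordinate indexed by $w\in W$ equals $2$ precisely when $w$ is the pair-mate of $u$. Consequently, if both members of some pair $\{a_i,b_i\}$ are absent from $W$, then $r(a_i\mid W)=r(b_i\mid W)=(1,1,\ldots,1)$, so $W$ is not a metric generator. Hence any resolving set must meet every pair, which forces $\dim(\Delta(G)) \geq n/2$.

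For the matching upper bound, I would take $W$ to contain exactly one vertex from each pair, so $|W|=n/2$, and verify that distinct vertices receive distinct $W$-representations by checking three cases: (i) two vertices of $W$ differ in the positions where each has coordinate $0$; (ii) a vertex in $W$ and a vertex outside $W$ differ in the coordinate of the former (namely $0$ versus positive); (iii) for $u,v \notin W$, their pair-mates $u^\ast, v^\ast$ both lie in $W$ and are distinct, since $u\ne v$ and $u,v$ cannot themselves be pair-mates (only one of each pair is excluded from $W$). In case (iii), $r(u\mid W)$ has its unique $2$ at coordinate $u^\ast$ while $r(v\mid W)$ has its unique $2$ at $v^\ast$, so the two vectors differ. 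This shows $W$ is a resolving set of size $n/2$.

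No serious obstacle is expected: the entire argument hinges on extracting the cocktail party structure from Theorem~\ref{2.2}, after which the metric dimension reduces to a short case analysis on pair-mates. The hypothesis that $n$ is even is used precisely to ensure the complement is a genuine perfect matching, and thus that the lower bound $n/2$ is an integer.
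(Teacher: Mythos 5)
Your proof is correct, and its core mechanism is the same one the paper relies on: by Theorem~\ref{2.2} the complement of $\Delta(G)$ is a perfect matching, each non-adjacent pair must be met by any resolving set, and picking one vertex per pair suffices. The difference is in packaging. The paper does not prove Theorem~\ref{3.1} in place; it defers it to Section~\ref{s:further}, where it follows from a general result stating that for a connected graph in which every vertex has a twin, the base size, metric dimension and adjacency dimension all equal $n-r$, with $r$ the number of twin equivalence classes. Here the pair-mates are exactly the (open) twin classes, so $r=n/2$. What the paper's route buys is the stronger conclusion that base size and adjacency dimension also equal $n/2$, plus a reusable lemma applied again to the Fitting-height-$2$ case; what your route buys is a short, self-contained argument with explicit distance vectors that does not require setting up the twin machinery. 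Your case analysis for the upper bound and the all-ones collision for the lower bound are both sound, so there is no gap --- only a less general framing.
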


We defer the proof of this theorem until Section~\ref{s:further}, where it will
follow from a general graph-theoretic result.

\begin{example}
Since $\Gamma$ is a $4$-regular graph on six vertices, there exists a solvable group $G$ such that $\Delta(G)\cong \Gamma$ (see \cite{SST}). One can check that $W=\{v_1,v_2,v_3\}$ is a minimum resolving set of $\Gamma$, and so its metric dimension is $\frac{n}{2}=\frac{6}{2}=3$. See Figure \ref{f1} for a visual.
\end{example}

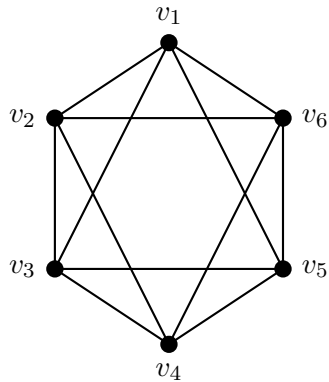
\begin{figure}[ht]
\centering
\begin{tikzpicture}[scale=1,colorstyle/.style={circle, draw=black!100,fill=black!100, thick, inner sep=0pt, minimum size=2mm}]
\node at (2,3)[colorstyle,label=above:$v_1$]{};
\node at (0.5,2)[colorstyle,label=left:$v_2$]{};
\node at (0.5,0)[colorstyle,label=left:$v_3$]{};
\node at (2,-1)[colorstyle,label=below:$v_4$]{};
\node at (3.5,0)[colorstyle,label=right:$v_5$]{};
\node at (3.5,2)[colorstyle,label=right:$v_6$]{};
\draw[thick] (2,3) --(0.5,2)--(0.5,0)--(2,-1)--(3.5,0)--(3.5,2)--(2,3);
\draw[thick] (2,3) --(0.5,0)--(3.5,0)--(2,3);
\draw[thick] (0.5,2) --(2,-1)--(3.5,2)--(0.5,2);
\end{tikzpicture} 
\caption{A $4$-regular character degree graph with six vertices}
\label{f1}
\end{figure}

Now, we obtain the metric dimension of a particular class of character degree graphs. Before proceeding in general, however, we give an example.

\begin{example}\label{e3.2}
Consider the graph shown in Figure \ref{f3}. It is a character degree graph of a solvable group $G$ with six vertices (see\cite[p: 503 ]{BL}). Observe that $\Delta(G)$ consists of two complete blocks. The vertices of one block are not adjacent to those of the other block, but there is one cut vertex adjacent to all others. Here $W=\{v_2,v_3,v_5\}$ is a minimum resolving set of $\Delta(G)$ and so its metric dimension is $n-3=6-3=3$.
\end{example}

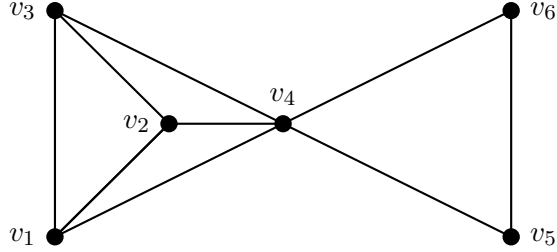
\begin{figure}[ht]
\centering
\begin{tikzpicture} [scale=1.5,colorstyle/.style={circle, draw=black!100,fill=black!100, thick, inner sep=0pt, minimum size=2mm}]            
\node at (0,0)[colorstyle,label=left:$v_1$]{};
\node at (1,1)[colorstyle,label=left:$v_2$]{};
\node at (0,2)[colorstyle,label=left:$v_3$]{};
\node at (2,1)[colorstyle,label=above:$v_4$]{};
\node at (4,0)[colorstyle,label=right:$v_5$]{};
\node at (4,2)[colorstyle,label=right:$v_6$]{};
\draw[thick] (0,0) --(0,2) --(2,1) --(4,2);
\draw[thick] (0,0) --(2,1) --(4,0) --(4,2);
\draw[thick] (0,0) --(1,1) --(2,1) ;
\draw[thick] (0,0) --(1,1) --(0,2) ;
\end{tikzpicture} 
\caption{Character degree graph has $2$ blocks of diameter $2$}
\label{f3}
\end{figure}

We now obtain the metric dimension of a class of character degree graphs. In fact we consider the class of character degree graphs exhibited in Example~\ref{e3.2}. By Theorem~\ref{2.1}, if $\Delta(G)$ is not a block and the diameter of $\Delta(G)$ is at most $2,$ then each block of $\Delta(G)$ is a complete graph. For a graph $\Gamma$ and a vertex $v\notin V(\Gamma)$, by $\Gamma - v$, we mean a graph obtained by joining every vertex of $\Gamma$ with $v$.

\begin{theorem}\label{3.2}  Assume that $G$ is a solvable group. Let $\Delta(G)$ be the character degree graph of $G$ with $n$ vertices and $\Delta(G)=K_{n-m-1}-v-K_m$ for some integer $m$ and $1 \leq m \le n-m-1$. Then the metric dimension of $\Delta(G)$ is $n-3$.
\end{theorem}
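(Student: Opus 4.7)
The plan is to prove $\dim(\Delta(G))=n-3$ by matching lower and upper bounds, built around the twin structure of $\Delta(G)$. The hypothesis $\Delta(G)=K_{n-m-1}-v-K_m$, interpreted via the definition just before the theorem, says $v$ is a cut vertex joined to every vertex of two disjoint cliques $U=V(K_{n-m-1})$ and $W_0=V(K_m)$, with no edges between $U$ and $W_0$. This is consistent with Theorem~\ref{2.1}, and it forces every distance in $\Delta(G)$ to lie in $\{0,1,2\}$, which keeps all computations finite and transparent.

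For the lower bound I would exploit \emph{true twins}. Any two distinct vertices $u,u'\in U$ satisfy $N[u]=N[u']=U\cup\{v\}$, since $v$ is adjacent to every vertex of $U$ and no vertex of $U$ has a neighbour in $W_0$; consequently $d(u,x)=d(u',x)$ for every $x\notin\{u,u'\}$. The analogous statement holds for pairs in $W_0$. Hence no resolving set can miss two vertices of $U$ or two vertices of $W_0$, so any resolving set contains at least $|U|-1=n-m-2$ elements of $U$ and at least $|W_0|-1=m-1$ elements of $W_0$, giving $\dim(\Delta(G))\ge (n-m-2)+(m-1)=n-3$.

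For the upper bound I would exhibit an explicit resolving set of the right size. Fix $u^{\ast}\in U$ and $w^{\ast}\in W_0$ and put
\[
S=(U\setminus\{u^{\ast}\})\cup(W_0\setminus\{w^{\ast}\}),\qquad |S|=n-3.
\]
The only vertices outside $S$ are $u^{\ast}$, $w^{\ast}$ and $v$. Reading off distances directly from the structure: restricted to the $U\setminus\{u^{\ast}\}$-coordinates, $r(u^{\ast}\mid S)$ is constantly $1$ and $r(w^{\ast}\mid S)$ is constantly $2$; on the $W_0\setminus\{w^{\ast}\}$-coordinates the roles reverse; and $r(v\mid S)$ is constantly $1$ on both halves. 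These three vectors are pairwise distinct, and each vertex of $S$ is trivially separated from everything else by the $0$ in its own coordinate. Hence $S$ resolves $\Delta(G)$, so $\dim(\Delta(G))\le n-3$, completing the argument.

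The main subtle point, which is also the only place where the hypothesis $1\le m\le n-m-1$ is really used, is ensuring the three vectors $r(u^{\ast}\mid S)$, $r(w^{\ast}\mid S)$, $r(v\mid S)$ are simultaneously pairwise distinct: if one of $U\setminus\{u^{\ast}\}$ or $W_0\setminus\{w^{\ast}\}$ were empty then $v$ would collapse with $u^{\ast}$ or $w^{\ast}$. So the non-degeneracy of both cliques (so that each half of $S$ is non-empty) is the critical hypothesis, exactly as realised in Example~\ref{e3.2}; once this is in hand the verification is a routine three-line distance computation.
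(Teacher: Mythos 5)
Your argument follows essentially the same route as the paper's: the same resolving set $S=V(\Delta(G))\setminus\{u^{\ast},v,w^{\ast}\}$ with the same three distance vectors for the upper bound, and a twin-based argument for the lower bound. Your lower bound is in fact cleaner than the paper's: you observe directly that the vertices of each clique are mutual closed twins, so any resolving set must contain all but one of each clique, giving $\dim\ge(n-m-2)+(m-1)=n-3$ at once, whereas the paper argues by contradiction from a hypothetical $(n-4)$-set and a two-case pigeonhole. That is a genuine (if modest) improvement in transparency, and it is the same observation the paper itself later formalizes in Section~\ref{s:further}.

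There is, however, one concrete flaw, precisely at the point you flag as ``the main subtle point.'' You assert that the hypothesis $1\le m\le n-m-1$ guarantees that both $U\setminus\{u^{\ast}\}$ and $W_0\setminus\{w^{\ast}\}$ are non-empty. It does not: when $m=1$ we have $W_0=\{w^{\ast}\}$, so $W_0\setminus\{w^{\ast}\}=\emptyset$, your set $S=U\setminus\{u^{\ast}\}$ has the right size $n-3$ but fails to resolve, since $r(u^{\ast}\mid S)$ and $r(v\mid S)$ are both the all-ones vector. Worse, in this case the theorem's conclusion itself is false: for $m=1$ the graph is $K_{n-1}$ with a pendant vertex attached at $v$, and one checks (e.g.\ for $n=4$, a triangle with a pendant edge) that the metric dimension is $n-2$, not $n-3$; your own twin count only yields the lower bound $n-3$ there because $|W_0|-1=0$. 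So your proof is complete only under the hypothesis $2\le m\le n-m-1$. To be fair, the paper's proof has exactly the same defect (its vectors $r(v_{m+1}\mid W)$ and $r(v_n\mid W)$ coincide when $m=1$), so either the case $m=1$ must be excluded from the statement or one must show it cannot occur as the character degree graph of a solvable group; but since you explicitly claimed the stated hypothesis closes this gap, the claim as written is incorrect.
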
 

\begin{proof} Assume that $\Delta(G) = K_{n-m-1}-v-K_m$ be the character degree graph of $G$ with $n$ vertices. Since the complete graph $K_m$ is not adjacent with any vertex of $ K_{n-m-1},$ the diameter of $\Delta(G)$ is $2.$ Further $V(K_{n-m-1}) \cup \{v\}$ and $V(K_m) \cup \{v\}$ generate cliques of $\Delta(G).$ Hence $\Delta(G)$ is not a block.  Let $V(K_m) = \{v_1,v_2, v_3,\ldots,v_m\}, v_{m+1}= v$ and  $V(K_{n-m-1})=\{v_{m+2},v_{m+3},\ldots,v_{n}\}$ Hence $V(\Delta(G))=\{v_1,\ldots,v_n\}.$ Let $W=V(\Delta(G))\setminus \{v_1,v_{m+1},v_n\}.$ Note that the jth coordinate of $r(v_j|W)$ is zero and all other coordinates of $r(v_j|W)$ are positive. Further, we have $r(v_i|W)=r(v_j|W)$ implies that $v_i=v_j$ for $i\neq j.$ Thus $r(v_j|W)$ are distinct for $2\leq j \leq n-1$ and $j\neq m+1.$ 

Further $r(v_1|W_1)=(\underbrace{1,1,\ldots,1,}_{m-1\hspace{0.2cm} times}\underbrace{2,2,\ldots,2}_{n-m-2\hspace{0.2cm}times})$, 

$r(v_{m+1}|W_1)=(\underbrace{1,1,\ldots,1,}_{m-1\hspace{0.2cm} times}\underbrace{1,1,\ldots,1}_{n-m-2\hspace{0.2cm} times})$ and 

$r(v_n|W_1)=(\underbrace{2,2,\ldots,2,}_{m-1\hspace{0.2cm} times} \underbrace{1,1,\ldots,1}_{n-m-2\hspace{0.2cm} times}).$  

Consequently $W$ is a resolving set for $\Delta(G).$ 

\textbf{Claim.} $W$ is a minimum resolving set for $\Delta(G).$ Suppose that $W_1\subset V,$ $|W_1|\leq n-4$ and $W_1$ is a resolving set for $\Delta(G).$ Without loss of generality one can take as $|W_1|=n-4.$ Then there exist four elements $v_i, v_j, v_k, v_{\ell}\in V\setminus W_1.$
As per the structure of $\Delta(G),$ at least one of the following cases shall take place.

\textbf{Case 1.} Two elements out of four elements $v_i,v_j,v_k,v_{\ell}$ are in $V(K_m).$  Without loss of generality, let us take $v_i,v_j\in V(K_m).$
Then

$r(v_i|W_1) = (\underbrace{1,1,\ldots,1,}_{m-2\hspace{0.2cm} times}1, \underbrace{2, 2,\ldots ,2}_{n-m-3\hspace{0.2cm} times})$ and 

$r(v_j |W_1) = (\underbrace{1,1,\ldots,1,}_{m-2\hspace{0.2cm} times}1,  \underbrace{2, 2,\ldots ,2}_{n-m-3\hspace{0.2cm} times}).$ From this $W_1$ is not a resolving set, a contradiction.

\textbf{Case 2.} Two elements out of four elements $v_i,v_j,v_k,v_{\ell}$ are in $V(K_{n-m-1}).$ Without loss of generality, let us take $v_i,v_j\in V(K_{n-m-1}).$ Then
$r(v_i|W_1) = (\underbrace{2,2,\ldots,2,}_{m-2\hspace{0.2cm} times}1, \underbrace{1, 1,\ldots ,1}_{n-m-3\hspace{0.2cm} times})$ and 
$r(v_j |W_1) = (\underbrace{2,2,\ldots,2,}_{m-2\hspace{0.2cm} times}1,  \underbrace{1, 1,\ldots ,1}_{n-m-3\hspace{0.2cm} times}).$ From this $W_1$ is not a resolving set, a contradiction.

Hence the metric dimension of $\Delta(G)$ is $n-3.$  
\qed
\end{proof} 

In Theorem \ref{2.5}, Hafezieh et al. (see \cite[Theorem 10]{HHH}) obtained the structure of the character degree graph with a cut vertex and of diameter $3.$ Using this structure, now we obtain the metric dimension.

\begin{theorem}\label{3.5} Let $G$ be a finite solvable group and $\Delta(G)$ be the character degree graph of $G$ with $n$ vertices. Assume that the diameter of $\Delta(G)$ is $3$ with Lewis' partition $\rho(G)=\rho_1 \cup \rho_2 \cup \rho_3\cup \rho_4.$ Suppose that $\Delta(G)$ contains a cut vertex, then one of the following is true.
\begin{itemize}
\item[\rm (1)] If $|\rho_1|=1,$ then the metric dimension of $\Delta(G)$ is $n-3.$
\item[\rm (2)] If $|\rho_1| \geq 2$, then the metric dimension of $\Delta(G)$ is $n-4.$
\end{itemize}
\end{theorem}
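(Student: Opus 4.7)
The plan is to use Theorem~\ref{2.3} and Remark~\ref{rem2.1} to pin down the structure of $\Delta(G)$, identify its closed-twin classes, and then handle the two cases separately. Because $\Delta(G)$ has diameter $3$ and a cut vertex, Theorem~\ref{2.3} forces $|\rho_2|=1$; write $\rho_2=\{v\}$, where $v$ is the cut vertex. By Remark~\ref{rem2.1}, both $\rho_1\cup\{v\}$ and $\rho_3\cup\rho_4$ induce cliques, $v$ is adjacent to every vertex of $\rho_3$ and to no vertex of $\rho_4$, and there are no edges between $\rho_1$ and $\rho_3\cup\rho_4$. From this structure a routine computation gives all pairwise distances, and shows that any two vertices in the same block $\rho_i$, $i\in\{1,3,4\}$, have the same closed neighborhood. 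They are therefore closed twins, so every resolving set must contain at least $|\rho_i|-1$ vertices of $\rho_i$; summing over $i\in\{1,3,4\}$ gives the baseline lower bound $n-4$.

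For case~(2), where $|\rho_1|\ge 2$, the plan is to verify that $W=V(\Delta(G))\setminus\{u_1,u_3,u_4,v\}$, for arbitrary $u_i\in\rho_i$, is a resolving set realising this bound. A direct distance computation shows that on the coordinate blocks $\rho_1\setminus\{u_1\}$, $\rho_3\setminus\{u_3\}$, $\rho_4\setminus\{u_4\}$ the excluded vertices $u_1$, $v$, $u_3$, $u_4$ exhibit the block-constant patterns $(1,2,3)$, $(1,1,2)$, $(2,1,1)$, $(3,1,1)$ respectively. Since $|\rho_1|\ge 2$ leaves at least one coordinate in $\rho_1\setminus\{u_1\}$, the first block separates $\{u_1,v\}$, $u_3$, and $u_4$, while the second block separates $u_1$ from $v$; hence all four are pairwise distinguished, even in the degenerate subcase $|\rho_4|=1$ when the third block is empty. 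This yields $\dim(\Delta(G))=n-4$.

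For case~(1), where $\rho_1=\{w\}$, the twin bound still only gives $n-4$, and the main obstacle, which I regard as the crux of the argument, is to rule out any resolving set of that size. Suppose for contradiction that $W$ is such a set, so $|V\setminus W|=4$. The closed-twin condition forces $|(V\setminus W)\cap\rho_i|\le 1$ for $i\in\{3,4\}$, and $\{w,v\}$ contributes at most two further vertices to $V\setminus W$; consequently $V\setminus W=\{u_3,u_4,w,v\}$ for some $u_3\in\rho_3$ and $u_4\in\rho_4$. Then $W=(\rho_3\cup\rho_4)\setminus\{u_3,u_4\}$, and since $\rho_3\cup\rho_4$ is a clique containing both $u_3$ and $u_4$, the $W$-representations of $u_3$ and $u_4$ are both the all-ones vector, contradicting that $W$ resolves $\Delta(G)$. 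A matching resolving set of size $n-3$ is $W=V(\Delta(G))\setminus\{u_3,u_4,w\}$: the retained cut vertex $v$ distinguishes $u_3$ from $u_4$ via distances $1$ and $2$, while the remaining coordinates in $\rho_3\cup\rho_4$ separate both from $w$, whose distances to $\rho_3$ and $\rho_4$ are $2$ and $3$.
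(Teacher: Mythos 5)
Your proposal is correct and follows essentially the same route as the paper: use Theorem~\ref{2.3} and Remark~\ref{rem2.1} to fix the structure, exhibit an explicit resolving set of the claimed size, and rule out smaller sets by observing that two omitted vertices from the same part $\rho_i$ ($i\in\{1,3,4\}$) would share a metric representation --- your closed-twin lower bound is exactly the content of the paper's Cases 1--3, and your clique argument for $\{u_3,u_4\}$ in case (1) matches the paper's Case 3. The only cosmetic differences are that you package the lower bound via twin classes (as the paper itself does later, in the Fitting height $2$ section) and that in case (1) your resolving set retains the cut vertex $v$ while the paper's omits it; both choices work.
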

\begin{proof} Since $\Delta(G)$ has diameter three, as mentioned in Remark~\ref{rem2.1} the primes in $\rho_1$ are not adjacent to any prime in $\rho_3 \cup \rho_4$, and the primes in $\rho_4$ are not adjacent to any prime in $\rho_1 \cup \rho_2$. Both $\rho_1 \cup \rho_2$ and $\rho_3 \cup \rho_4$ induce complete subgraphs of $\Delta(G)$. By Theorem~\ref{2.3}, $|\rho_2|=1.$   Let us label the vertices of $\rho_1,\rho_2,\rho_3$ and $\rho_4$ as $\rho_1=\{u_1,u_2,\ldots,u_{n_1}\}, \rho_2=\{v_1\},\rho_3=\{w_1,w_2,\ldots,w_{n_3}\}$ and $\rho_4=\{x_1,x_2,\ldots,x_{n_4}\}.$ Thus $n=n_1+1+n_3+n_4.$ 
 
(1). Assume that $\rho_1=\{u_1\}.$ Now, we have $V(\Delta(G))=\{u_1,v_1,w_1,\linebreak w_2,\ldots,w_{n_3},x_1,x_2,\ldots, x_{n_4}\}.$ 

\textbf{Claim.} $X=V(\Delta(G))\setminus \{u_1,v_1,x_{n_4}\}$ is a minimum resolving set for $\Delta(G).$ 

For, the jth coordinate of  $r(w_j|W)$ are distinct for $1\leq j \leq n_3$ and $r(x_j|W)$ are distinct for $1\leq j\leq n_4-1$. It remains to consider the vertices $u_1,v_1,x_{n_4}.$ 

Note that $r(u_1|X)=(\underbrace{2,2,\ldots,2}_{n_3 \hspace{0.2cm} times},\underbrace{3,3,\ldots,3}_{n_4-1 \hspace{0.2cm} times})$,

$r(v_1|X) = ( \underbrace{1,1,\ldots,1}_{n_3 \hspace{0.2cm} times},\underbrace{2,2,\ldots,2}_{n_4-1 \hspace{0.2cm} times})$ and 

$r(x_{n_4}|X) = (\underbrace{1, 1,\ldots,1}_{n_3 \hspace{0.2cm} times},\underbrace{1,1,\ldots,1}_{n_4-1 \hspace{0.2cm} times})$.
 
It follows that $X$ is a resolving set for $\Delta(G)$. We shall now proceed to prove that $X$ is a minimum resolving set for $\Delta(G)$. Suppose $X_1\subset V(\Delta(G)),$ $|X_1|\leq n-4$ and $X_1$ is a resolving set. Without loss of generality let us take $|X_1|=n-4.$  Thus there exist four elements in $\rho (G)$ which are not in $X_1.$

\textbf{Case 1.} Suppose there exist $w_i,w_j\in\rho_3$ with $w_i,w_j\notin X_1.$ Then the metric representations $r(w_i|X_1)$ and $r(w_j|X_1)$ are same, a contradiction to $X_1$ is a resolving set.

\textbf{Case 2.} Suppose there exist $x_i,x_j\in\rho_4$ and  $x_i,x_j\notin X_1.$
Then the metric representations $r(x_i|X_1)$ and $r(x_j|X_1)$ are same, a contradiction to $X_1$ is a resolving set.

\textbf{Case 3.} Suppose there exist $w_i\in \rho_3,$ and $x_j\in \rho_4$ and  $u_1,v_1,w_i,x_j \notin X_1$. This implies that $X_1\subset \rho_3\cup\rho_4.$ As mentioned in Remark~\ref{rem2.1}, $\rho_3\cup\rho_4$ induce a complete subgraph of $\Delta(G).$ Then the metric representations $r(w_i|X_1)$ and $r(x_j|X_1)$ are same, a contradiction to $X_1$ is a resolving set.

Therefore, $X_1$ is not a resolving set. Hence $X=V(\Delta(G))\setminus \{u_1,v_1,x_{n_4}\}$ is a minimum resolving set for $\Delta(G)$ and so the metric dimension is $n-3.$

(2). Assume that $|\rho_1|\geq 2.$ Now, $V(\Delta(G))= \{u_1,u_2,\ldots,u_{n_1},v_1, w_1,w_2,\ldots,\linebreak w_{n_3}, x_1,x_2,\ldots,x_{n_4}\}$. 

\textbf{Claim.} $X=V(\Delta(G))\setminus \{u_1, v_1, w_{n_3},  x_{n_4}\}$ is a minimum resolving set for $\Delta(G).$ For, the jth coordinate of $r(u_j|X)$ is $0$, while the jth coordinate of all other vertex representations is positive. Thus, it is clear that $r(u_i|X) = r(u_j|X)$ implies that $u_i=u_j$. Therefore $r(u_j|W)$ are distinct for $2\leq j \leq n_1$. In the same way $r(w_j|W)$ are distinct for $1\leq j \leq n_3-1$ and $r(x_j|W)$ are distinct for $1\leq j\leq n_4-1$. We need to consider the vertices $u_1,v_1,w_{n_3},x_{n_4}.$ 

Note that $r(u_1|X)=(\underbrace{1,\ldots,1,}_{n_1-1\hspace{0.2cm} times}\underbrace{2,2,\ldots,2}_{n_3-1 \hspace{0.2cm} times},\underbrace{3,3,\ldots,3}_{n_4-1 \hspace{0.2cm} times})$,

$r(v_1|X) = (\underbrace{1,\ldots,1,}_{n_1-1\hspace{0.2cm} times} \underbrace{1,1,\ldots,1}_{n_3-1 \hspace{0.2cm} times},\underbrace{2,2,\ldots,2}_{n_4-1 \hspace{0.2cm} times})$

$r(w_{n_3}|X) = (\underbrace{2,2,\ldots,2,}_{n_1-1\hspace{0.2cm} times} \underbrace{1, 1,\ldots,1}_{n_3-1 \hspace{0.2cm} times},\underbrace{1,1,\ldots,1}_{n_4-1 \hspace{0.2cm} times})$ and 

$r(x_{n_4}|X) = (\underbrace{3,3,\ldots,3,}_{n_1-1\hspace{0.2cm} times} \underbrace{1, 1,\ldots,1}_{n_3-1 \hspace{0.2cm} times},\underbrace{1,1,\ldots,1}_{n_4-1 \hspace{0.2cm} times})$.
 
It follows that $X$ is a resolving set for $\Delta(G)$. We shall now proceed to prove that $X$ is a minimum resolving set for $\Delta(G)$. Suppose $X_1\subset V(\Delta(G)),$ $|X_1|\leq n-5$ and $X_1$ is a resolving set. Without loss of generality let us take $|X_1|=n-5.$  As a result, there exist five elements in $V(\Delta(G))$ which are not in $X_1.$ Now, one of the following cases is true.

\textbf{Case 1.} Suppose there exist $u_i,u_j\in\rho_1$ and $u_i,u_j\notin X_1.$ Then the
metric representations $r(u_i|X_1)$ and $r(u_j|X_1)$ are same, a contradiction to the statement that $X_1$ is a resolving set.

\textbf{Case 2.} Suppose there exist $w_i, w_j\in\rho_3$ and  $w_i,w_j \notin X_1.$ Then the metric representations $r(w_i|X_1)$ and $r(w_j|X_1)$ are same, a contradiction to $X_1$ is a resolving set.

\textbf{Case 3.} Suppose there exist $x_i,x_j \in \rho_4$ and  $x_i, x_j \notin X_1.$
Then the metric representations $r(x_i|X_1)$ and $r(x_j|X_1)$ are same, a contradiction to $X_1$ is a resolving set.

Therefore, $X_1$ is not a resolving set. Hence $X=V(\Delta(G))\setminus \{u_1,v_1,  w_{n_3}, x_{n_4}\}$ is a minimum resolving set for $\Delta(G)$ and so the metric dimension is $n-4.$
\qed
\end{proof}

\begin{example}\label{e4.1}
Consider the graph given in Figure \ref{f4}. It is a graph with diameter $3$, and so is a character degree graph $\Delta(G)$ of a solvable group $G$ with a cut vertex (see \cite[p: 629]{Lewis3}). Further $\rho_1=\{v_1 \},\rho_2=\{v_2 \},\rho_3=\{v_3,v_4,v_5 \}$ and $\rho_4=\{v_6 \}.$ Clearly, $\rho_1 \cup \rho_2$ and $\rho_3\cup\rho_4$ induce complete subgraphs of $\Delta(G).$ Further note that $X=\{v_3,v_4,v_5\}$ is a minimum  resolving set of $\Delta(G)$ and so its metric dimension is $n-3=6-3=3$.
\end{example}

\begin{figure}[ht]
\centering
\begin{tikzpicture} [scale=2,colorstyle/.style={circle, draw=black!100,fill=black!100, thick, inner sep=0pt, minimum size=2mm}]            
\node at (1,0)[colorstyle,label=right:$v_1$]{};
\node at (0,0)[colorstyle,label=above:$v_2$]{};
\node at (-0.5,0)[colorstyle,label=above:$v_3$]{};
\node at (-1.5,0.75)[colorstyle,label=above:$v_4$]{};
\node at (-1.5,-0.75)[colorstyle,label=right:$v_5$]{};
\node at (-2.5,0)[colorstyle,label=left:$v_6$]{};
\draw[thick] (1,0) --(0,0) ;
\draw[thick] (0,0) --(-1.5,0.75) --(-2.5,0) -- (-1.5,-0.75)--(0,0);
\draw[thick] (0,0) -- (-0.5,0) --(-1.5,0.75) ;
\draw[thick] (-0.5,0) --(-2.5,0);
\draw[thick]  (-0.5,0)-- (-1.5,-0.75)-- (-1.5,0.75);
\end{tikzpicture} 
\caption{Character degree graph with diameter $3$ and a cut vertex}
\label{f4}
\end{figure}
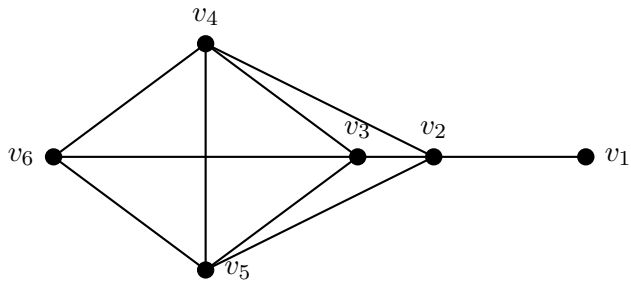

\section{Groups with Fitting height $2$}\label{FitHei}
Recall that a finite group $G$ has Fitting height~$2$ if $G/F(G)$ is nilpotent,
where the Fitting subgroup $F(G)$ is the largest nilpotent normal subgroup of
$G$.

Our discussion of the character degree graphs of finite groups with Fitting
height~$2$ is helped greatly by the theorem of Mark Lewis~\cite{Lewis4}:

\begin{theorem}\cite[Theorem A]{Lewis4}\label{2.4}
Let $\Gamma$ be a graph with $n$ vertices. There exists a solvable group $G$ of Fitting height $2$ with  $\Delta(G)=\Gamma$ if and only if the vertices of degree less than $n-1$ can be partitioned into two subsets, each of which induces a complete subgraph of $\Gamma$ and one of which contains only vertices of degree $n-2$.
\end{theorem}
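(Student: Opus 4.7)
The plan is to prove both directions by analyzing $\Irr(G)$ via Clifford theory over the Fitting subgroup $F = F(G)$, exploiting the nilpotency of both $F$ and $G/F$. The key decomposition for any $\chi \in \Irr(G)$ is $\chi(1) = [G:I_G(\theta)]\,\psi(1)$ with $\theta \in \Irr(F)$ and $\theta(1) \mid \psi(1)$; since $F$ is nilpotent, $\theta(1)$ is a prime power, and since $I_G(\theta)/F$ is a subgroup of the nilpotent group $G/F$, the other prime-power factors in $\chi(1)$ come from Sylow components of $G/F$ acting via the inertia indices.

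For the necessity direction, I would partition the non-universal vertices of $\Delta(G)$ as follows. Let $\pi_1$ consist of the primes $p$ such that some $\theta \in \Irr(F)$ has $p \mid \theta(1)$ (call these \emph{Fitting-type}), and let $\pi_2$ consist of the remaining non-universal primes, which must then be \emph{quotient-type}, arising only through indices $[G:I_G(\theta)]$. That $\pi_1$ induces a clique follows from tensor decomposition: for $p, q \in \pi_1$, choose $\theta_p, \theta_q$ with $p \mid \theta_p(1)$ and $q \mid \theta_q(1)$, and use the decomposition $F = F_p \times F_{p'}$ (as $F$ is nilpotent) to assemble a single character of $F$ with $pq \mid \theta(1)$, which lifts to a character of $G$ whose degree retains both primes. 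That $\pi_2$ induces a clique follows by a parallel argument: for $p, q \in \pi_2$, pick $\theta_r, \theta_s \in \Irr(F)$ not fixed by the Sylow $p$- and Sylow $q$-subgroups of $G/F$ respectively, and combine them via the product structure of $F$ into one $\theta$ whose inertia index in $G$ is divisible by both $p$ and $q$. For the crucial $n-2$ condition on $\pi_2$, I would argue that each $p \in \pi_2$ has a unique non-neighbor, namely the single prime $q$ for which the Sylow $p$-subgroup of $G/F$ fixes every contribution of $F_q$ in $\Irr(F)$; nilpotency of $G/F$ and the prime-power nature of characters of $F$ force at most one such $q$.

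For the sufficiency direction, given a graph $\Gamma$ with the stated partition $(U, \pi_1, \pi_2)$, I would construct $G = N \rtimes H$ where $N = \prod_{p \in \pi_1 \cup U} N_p$ is a direct product of $p$-groups whose characters realize the Fitting-type prime-power degrees, and $H$ is a nilpotent complement whose Sylow $p$-subgroup (for $p \in \pi_2$) acts on $N$ with prescribed orbits so that the indices $[G:I_G(\theta)]$ pick up exactly the required quotient-type primes. The combinatorial hypothesis that each $v \in \pi_2$ has exactly one non-neighbor in $\pi_1$ tells the Sylow $p$-subgroup of $H$ to centralize exactly one factor $N_q$, which is arrangeable since the partition gives the needed pairing.

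The main obstacle is the sufficiency direction, specifically engineering these actions so that \emph{no extraneous edges} appear in $\Delta(G)$: one must ensure that every product $pq$ dividing some $\chi(1)$ corresponds to an actual edge of $\Gamma$, which requires simultaneous control of orbit lengths and inertia subgroups across all interacting Sylow components. The rigidity forced by the $n-2$ constraint on $\pi_2$ sharply limits which actions are permissible, and verifying the construction yields exactly $\Gamma$ rather than some supergraph is where the bulk of the technical work lies; one would likely invoke classical realization results of Isaacs, together with Lewis's own direct-product realizability theorems, to assemble the pieces consistently.
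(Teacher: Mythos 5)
This statement is Theorem A of Lewis~\cite{Lewis4}; the present paper only cites it and supplies no proof, so there is no internal argument to compare yours against. On its own terms, your outline follows the natural (and historically correct) strategy: Clifford theory over $F=F(G)$, with nilpotency of $F$ giving prime-power values $\theta(1)$ and the factorization $\Irr(F)=\prod_r\Irr(F_r)$, and nilpotency of $G/F$ controlling inertia indices. The argument that your set $\pi_1$ induces a clique is sound (assemble $\theta$ with $pq\mid\theta(1)$ and use $\theta(1)\mid\chi(1)$ for $\chi$ over $\theta$), and the mechanism for $\pi_2$-adjacency via $I_G(\theta)=\bigcap_r I_G(\theta^{(r)})$ is the right one.

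However, there are genuine gaps at exactly the two places where the theorem has content. In the necessity direction, the degree-$(n-2)$ condition is the heart of the matter, and your justification --- ``nilpotency of $G/F$ and the prime-power nature of characters of $F$ force at most one such $q$'' --- is an assertion, not an argument: nothing you write rules out that the Sylow $p$-subgroup of $G/F$ fails to create an edge from $p$ to two distinct primes $q_1,q_2\in\pi_1$, and excluding this requires the specific structural analysis (normal nonabelian Sylow subgroups and the action of $G/F$ on each $\Irr(F_r)$) that Lewis actually carries out. Moreover, your dichotomy ``Fitting-type versus arising only through indices $[G:I_G(\theta)]$'' is not exhaustive: in $\chi(1)=et\,\theta(1)$ a prime of $|G/F|$ can enter through the ramification factor $e$ even when $\theta$ is $G$-invariant, and your $\pi_2$-clique and unique-non-neighbor arguments, which manufacture primes only from inertia indices $t$, do not address this case. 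In the sufficiency direction, the construction of a group realizing a prescribed Lewis graph with \emph{no extraneous edges} is not carried out at all; you correctly identify this as where the bulk of the work lies, but identifying the hard step is not the same as performing it. As it stands the proposal is a plausible plan of attack rather than a proof.
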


We will call a graph having these properties a \textbf{Lewis graph}, and we
introduce some notation to describe these graphs. 

Let $\Gamma$ be a Lewis graph on $n$ vertices. We let $U$ be the set of
vertices of degree $n-1$ (the dominating or universal vertices), and $X$ and
$Y$ the two sets in Lewis' theorem, where $X$ contains only vertices of
degree $n-2$. The cardinalities of $U$, $X$, $Y$ will be denoted by $n_1$,
$n_2$ and $n_3$, where $n_1+n_2+n_3=n$, the number of vertices of $\Gamma$.

We note that $n_3\le n_2$, since non-adjacency induces a surjective (but not
necessarily injective) map from $X$ to $Y$.

It is easier to visualize the complement $\bar\Gamma$ of such a
graph $\Gamma$. In $\bar\Gamma$, the vertices in $U$ are the isolated vertices.
The edges of the graph have one vertex in $X$ and one in $Y$; each vertex
in $X$ lies on just one such edge. So $\bar\Gamma$ consists of $n_1$ isolated
vertices and a number $r$ of stars, where the central vertex of each star
lies in $Y$. Figure~\ref{f5} shows a typical such graph.

\begin{figure}[htbp]
\begin{center}
\setlength{\unitlength}{1.2mm}
\begin{picture}(100,15)
\multiput(20,0)(10,0){3}{\line(0,1){10}}
\multiput(20,0)(10,0){3}{\circle*{1}}
\multiput(20,10)(10,0){3}{\circle*{1}}
\multiput(20,10)(10,0){3}{\circle{2}}
\multiput(50,0)(20,0){3}{\circle*{1}}
\multiput(45,10)(10,0){6}{\circle*{1}}
\multiput(50,0)(20,0){3}{\line(-1,2){5}}
\multiput(50,0)(20,0){3}{\line(1,2){5}}
\put(90,0){\line(0,1){10}}
\put(90,10){\circle*{1}}
\multiput(45,10)(20,0){3}{\circle{2}}
\multiput(85,10)(5,0){2}{\circle{2}}
\multiput(0,10)(10,0){2}{\circle*{1}}
\multiput(0,0)(10,0){2}{\circle*{1}}
\multiput(0,10)(10,0){2}{\circle{2}}
\put(0,0){\circle{2}}
\put(4,4){$U$}
\put(98,0){$Y$}
\put(98,10){$X$}
\multiput(-3,-3)(1,0){16}{.}
\multiput(-3,12)(1,0){16}{.}
\multiput(-3,-3)(0,1){16}{.}
\multiput(12,-3)(0,1){16}{.}
\multiput(17,-3)(1,0){80}{.}
\multiput(17,2)(1,0){80}{.}
\multiput(17,7)(1,0){80}{.}
\multiput(17,12)(1,0){80}{.}
\multiput(17,-3)(0,1){6}{.}
\multiput(17,7)(0,1){6}{.}
\multiput(96,-3)(0,1){6}{.}
\multiput(96,7)(0,1){6}{.}
\end{picture}
\end{center}
\caption{\label{f5}The complement of $\Delta(G)$ and a resolving set}
\end{figure}
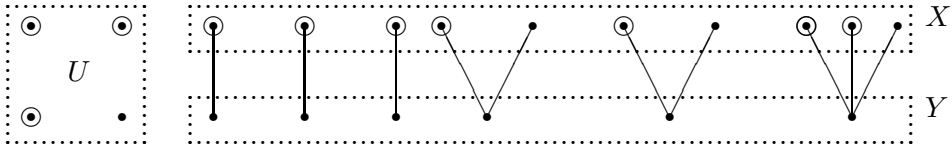

We let the stars be $S_1,\ldots,S_r$, where $r=n_3$; let their central vertices
be $y_1,\ldots,y_r$, and the neighbourhoods of these vertices be $N_1,\ldots
N_r$. (These sets form a partition of $X$.) Note that some stars may consist
of a single edge, in which case which vertex is in $X$ and which in $Y$ are
not determined. Let $s$ be the number of trees with more than two vertices.

\begin{theorem}\label{3.4} The metric dimension of a Lewis graph $\Gamma$ with the parameters just defined is $n_1+n_2-s-1$ if $n_1>0$, or $n_2-s$ if $n_1=0.$ 
\end{theorem}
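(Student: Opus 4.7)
The plan is to construct, in each of the two cases $n_1>0$ and $n_1=0$, an explicit resolving set of the claimed size and then prove minimality via the twin-vertex structure of $\Gamma$ combined with a case analysis. The key preliminary observation is that $\Gamma$ has diameter at most $2$: any two vertices either lie in one of the cliques on $U\cup X$ or $U\cup Y$, or are linked through a universal vertex or the opposite clique. Hence every coordinate of $r(v\mid W)$ lies in $\{0,1,2\}$, and resolving $\Gamma$ reduces to giving every vertex outside $W$ a distinct adjacency pattern on $W$.

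For the upper bound, I would take $W$ to consist of all but one vertex of $U$ (when $n_1>0$), all but one leaf from each $N_i$ with $|N_i|\geq 2$, and the unique leaf of each singleton star. A direct count confirms $|W|=n_1+n_2-s-1$ or $n_2-s$ respectively. Verification then reduces to computing adjacency patterns explicitly: a universal vertex $u\notin W$ is adjacent to all of $W$; a leaf $x\in N_i\setminus W$ is adjacent to all of $W$ except possibly $y_i$; a centre $y_j\notin W$ is adjacent to all of $W$ except $W\cap N_j$. Checking distinctness across the different types and across different indices $i,j$ is routine but requires care.

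For the lower bound, I would first identify the closed twin classes of $\Gamma$, which turn out to be precisely $U$ (when $n_1\geq 2$) and each $N_i$ with $|N_i|\geq 2$, since $N[u_1]=V=N[u_2]$ for $u_1,u_2\in U$ and $N[x]=X\cup(Y\setminus\{y_i\})\cup U=N[x']$ for $x,x'\in N_i$. Any resolving set therefore contains at least $(n_1-1)+\sum_{|N_i|\geq 2}(|N_i|-1)$ vertices. To tighten this to the claimed bound I introduce $A=\{i:N_i\not\subseteq W\}$ and $B=\{j:y_j\notin W\}$ and catalogue the forbidden collisions: a $u\in U\setminus W$ and a leaf $x\in N_i\setminus W$ coexist only if $y_i\in W$; two centres $y_j,y_{j'}\notin W$ coexist only if at least one of $N_j\cap W$, $N_{j'}\cap W$ is nonempty; and a leaf $x\in N_i\setminus W$ coexists with a centre $y_j\notin W$ only if $y_i\in W$ or $N_j\cap W\neq\emptyset$. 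Converting these constraints into inequalities on $|A|$, $|B|$ and $|U\setminus W|$ should sharpen the twin bound to the stated formula.

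The main obstacle will be the bookkeeping in the lower-bound case analysis, in particular tracking how the singleton stars (edges in $\bar\Gamma$, which contribute to $r=n_3$ but not to $s$) enter the picture, and handling the split between $n_1>0$ and $n_1=0$ in a way that makes the two slightly different formulas emerge cleanly. A secondary subtlety is that several inequivalent choices of $W$ can achieve the minimum, so I would want to fix one canonical resolving set in the write-up rather than enumerate all optima.
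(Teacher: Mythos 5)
Your construction of $W$ coincides with the set $W_0$ in the paper's proof, and it inherits the same fatal flaw: the set does not resolve. Suppose $n_1>0$ and some star has $|N_i|\ge 2$, and let $u\in U\setminus W$ and $x\in N_i\setminus W$ be two of the omitted vertices. You record that $x$ is adjacent to all of $W$ ``except possibly $y_i$'' --- but $y_i\in Y$ and your $W$ contains no vertex of $Y$, so $x$ is adjacent to \emph{all} of $W$, exactly as $u$ is. Hence $r(u\mid W)=r(x\mid W)=(1,1,\ldots,1)$, and the same collision occurs between the omitted leaves of two different stars with $|N_i|,|N_j|\ge 2$. So the ``routine but requires care'' verification fails whenever the formula subtracts more than $1$ in total. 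Your own lower-bound catalogue, if actually completed, detects this: since a vertex $v\notin W$ is determined by $N(v)\cap W$, equivalently by $N_{\bar\Gamma}(v)\cap W$, at most \emph{one} vertex outside $W$ may satisfy $N_{\bar\Gamma}(v)\cap W=\emptyset$; omitting a vertex of $U$, or omitting a leaf of $N_i$ while $y_i\notin W$, each creates such a vertex, so only one discount of $1$ is available in the whole graph, not one per large star plus one for $U$. Carrying the analysis through gives $n_1+n_2-1$ whenever $n_1\ge 1$ or $s\ge 1$ (and $n_2$ when $n_1=s=0$), not $n_1+n_2-s-1$.

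Two further points. First, your list of twin classes omits the open-twin pairs $\{x,y_i\}$ arising from the singleton stars (if $N_i=\{x\}$ then $N_\Gamma(x)=N_\Gamma(y_i)=V\setminus\{x,y_i\}$); these are exactly what forces one endpoint of each single edge of $\bar\Gamma$ into $W$ and produces the count $n_2-s$ on $X$, so restricting to closed twins loses information you need. Second, and more importantly, the statement you are asked to prove is contradicted by the paper's own Example accompanying it (Figure~\ref{f2}): there $n_1=1$, $n_2=3$, $s=1$, the displayed formula gives $2$, yet the example correctly computes the metric dimension as $3=n_1+n_2-1$. So the obstacle is not bookkeeping: an honest execution of your collision analysis would refute the stated formula rather than establish it, and the resolving set that does achieve the true minimum is $(U\setminus\{u_0\})\cup X$ with a single vertex of $X$ (from a star with $|N_i|\ge2$) or of $U$ removed.
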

\begin{proof}
Let $W$ be a resolving set.

We begin with an observation. Two vertices $v$ and $w$ in $\Gamma$ are
\emph{twins} if they have the same neighbours (possibly except for one another).(We have more to say about the twin relation in the next section.)
If $v$ and $w$ are twins, the transposition interchanging them and fixing all
other vertices is an automorphism of $\Gamma$. So there cannot be a resolving
set containing neither of the two vertices. Similarly, if a set of vertices
are mutual twins, then a resolving set must contain all or all but one of them.
Note that the relation of being twins is the same in a graph and its
complement, so we can work in the simpler graph $\bar\Gamma$.

Now the vertices in $U$ are pairwise twins; so $W$ contains at least $n_1-1$ of
them (if $n_1>0$).

Similarly, suppose that $N_i>1$ for some $i$. Then the vertices in $N_i$ are
pairwise twins, and so $W$ must contain at least $|N_i|-1$ of them.

Finally, if $y\in Y$ has a single neighbour $x$ in $X$ in $\bar\Gamma$, then
$x$ and $y$ are twins, so at least one of them lies in $W$; we may assume 
without loss that $x\in W$.

So $W$ contains at least $n_1-1$ vertices of $U$ (if $n_1>0$) and at least
$n_2-s$ vertices in $X$ (we can leave out at most one vertex from each of the
$s$ sets $N_i$ with $|N_i|>1$). This gives the number in the Theorem as a lower
bound.

However, it is straightforward to check that, if $W_0$ consists just of these
vertices, then $W_0$ is a resolving set: the vertex in $U\setminus W$ is
at distance~$1$ from all vertices in $W_0$; a vertex in $N_i$ with $i>1$ is
at distance one from all except $y_i$, from which it has distance $2$; and
if $N_i=\{x_j\}$, then $x_j$ has distance one from all except $y_j$, from
which it has distance~$2$. So $W_0$ is a resolving set, and by the first
part of the proof it is of minimal size. The theorem is proved. \qed
\end{proof}

\begin{example}
Consider the graph given in Figure \ref{f2}. It is a character degree graph $\Delta(G)$ for some solvable group $G$ of Fitting height 2 (see \cite[p: 503]{BL}). In this graph, $X=\{v_3,v_4,v_5\}$, $Y=\{v_1,v_2 \}$, and $U=\{v_6\}$, and hence $n_1=1$, $n_2=3$, and $n_3=2$. Note that $W=\{v_4, v_5, v_6\}$ is a minimum resolving set of $\Delta(G)$ and so the metric dimension is $n_1+n_2-1=1+3-1=3$.
\end{example}

\begin{figure}[ht]
\centering
\begin{tikzpicture} [scale=1.5,colorstyle/.style={circle, draw=black!100,fill=black!100, thick, inner sep=0pt, minimum size=2mm}]          
\node at (0,1)[colorstyle,label=above:$v_1$]{};
\node at (2,0)[colorstyle,label=right:$v_2$]{};
\node at (1,-1)[colorstyle,label=right:$v_3$]{};
\node at (-1,-1)[colorstyle,label=left:$v_4$]{};
\node at (-2,0)[colorstyle,label=left:$v_5$]{};
\node at (0,0)[colorstyle,label=below:$v_6$]{};
\draw[thick] (0,1) --(2,0)--(1,-1)--(-1,-1)--(-2,0)--(0,0)--(0,1)--(-2,0);
\draw[thick] (-1,-1) --(2,0);
\draw[thick] (1,-1) --(-2,0);
\draw[thick] (-1,-1) --(0,0);
\draw[thick] (1,-1) --(0,0);
\draw[thick] (0,0) --(2,0); 
\end{tikzpicture} 
\caption{Character degree graph of a solvable group with six vertices and Fitting height 2}
\label{f2}
\end{figure}
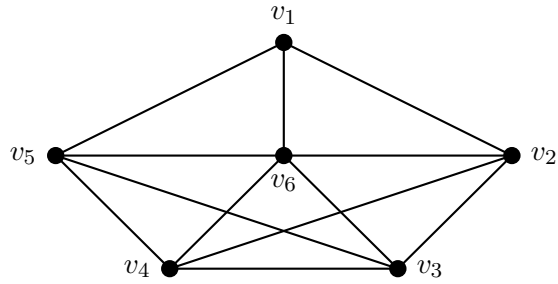

\section{Base size and adjacency dimension}\label{SecBase}
\label{s:further}

Two further invariants related to metric dimension have been studied; we look
briefly at these for character degree graphs, referring to \cite{BC} for a
summary of their properties for general graphs.

\begin{definition}
Let $\Gamma$ be a graph. The \textbf{base size} $b(\Gamma)$ of $\Gamma$ is the
smallest size of a set of vertices such that only the identity automorphism
fixes these vertices point wise. (Such a set of vertices is called a 
\textbf{base} for $\Gamma$.)
\end{definition}

Base size is an important invariant in computational group theory, introduced
by Sims~\cite{Sims} in 1971. It was introduced into graph theory several times,
by Erwin and Harary~\cite{EH} in 2006 by the name \textbf{fixing number}, by
Boutin~\cite{Boutin} in the same year who called it \textbf{determining number},
and by Fijav\v{z} and Mohar~\cite{FM} in 2004 under the name \textbf{rigidity
index}. It was used by Babai in his elementary bound for the orders of
primitive permutation groups~\cite{Babai2}.

\begin{definition}
The \textbf{adjacency dimension} $\adim(\Gamma)$ of $\Gamma$ is the metric
dimension of the metric space on the vertex set of $\Gamma$ in which the
distance of two distinct vertices $v$ and $w$ is~$1$ if $v$ and $w$ are 
adjacent, $2$ otherwise. A resolving set for this metric is an
\textbf{adjacency resolving set}.
\end{definition}

Though this parameter seems more natural than
metric dimension, it was only introduced in 2012, by Jannesari and
Omoomi~\cite{JO}, though adjacency resolving sets had been used earlier by
Babai~\cite{Babai1} under the name \textbf{distinguishing sets}. A survey of
this graph parameter is given by Bermudo~\textit{et al.}~\cite{BRRS}.

Thus, $S$ is an adjacency resolving set if the sets $S\cap N(v)$ are all
distinct for $v\notin S$. In particular, it is a resolving set if $\Gamma$ has
diameter~$2$.

Now it is clear that an adjacency resolving set is a resolving set, and a
resolving set is a base. So we have
\[b(\Gamma)\le\dim(\Gamma)\le\adim(\Gamma).\]

To analyse further, we look more closely at twin vertices in a graph. These
were mentioned in the preceding section; now we treat them more formally.

\begin{definition}
Two vertices $v$ and $w$ are \textbf{twins} if they have the same neighbours,
except possibly for one another. If twins $v$ and $w$ are joined, so that they
have the same closed neighbourhood, we call them \textbf{closed twins};
otherwise, they have the same open neighbourhood, and we call them
\textbf{open twins}. Note that, if vertices are open twins in $\Gamma$,
then they are closed twins in the complement of $\Gamma$, and \emph{vice versa}.
\end{definition}

\begin{proposition}
Being equal or twins is an equivalence relation on the vertex set of a graph,
and all pairs within an equivalence class are the same kind of twins (closed
or open).
\end{proposition}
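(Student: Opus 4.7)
The plan is to verify the three properties of an equivalence relation, and establish the same-type claim as a byproduct. Reflexivity is immediate from the disjunct $v = w$ in the definition, and symmetry follows from the symmetric wording (``same neighbours, except possibly for one another''). The only substantive step is transitivity, and it turns out to be intertwined with the same-type claim, so I would handle them together.

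Suppose that $u,v,w$ are pairwise distinct with $u,v$ twins and $v,w$ twins. First I would show that the edges $uv$ and $vw$ must both be present or both absent, i.e.\ the two twin pairs are of the same type. For if, say, $u,v$ are closed twins (so $u$ and $v$ are adjacent) while $v,w$ are open twins (so $v$ and $w$ are non-adjacent), then applying the definition of $(u,v)$-twinhood to the external vertex $w$ gives that $w$ is adjacent to $u$ iff $w$ is adjacent to $v$, so $w$ is not adjacent to $u$; but applying the definition of $(v,w)$-twinhood to the external vertex $u$ gives that $u$ is adjacent to $v$ iff $u$ is adjacent to $w$, so $u$ is adjacent to $w$. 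This contradicts the previous conclusion, and the opposite mixed case is symmetric.

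Having ruled out the mixed case, transitivity follows directly. For any $x \notin \{u,w\}$, either $x \ne v$, in which case the adjacency of $x$ to $u$, $v$, $w$ are mutually equivalent by the two hypotheses, or $x = v$, in which case we need that $v$ is adjacent to $u$ iff $v$ is adjacent to $w$; but this is precisely the same-type conclusion just established. Thus $u$ and $w$ are twins, of the same common type as the original pairs. Iterating along a chain of pairwise twin steps extends the conclusion to an arbitrary equivalence class, yielding both statements of the proposition simultaneously.

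The main conceptual obstacle will be recognising that transitivity cannot be proved independently of the same-type claim: the ``except possibly for one another'' clause creates a loophole precisely when the middle vertex $v$ is the external vertex tested for the candidate twin pair $(u,w)$, and the type-matching argument is exactly what is needed to close it.
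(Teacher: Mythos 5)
Your proof is correct and rests on the same key step as the paper's: assuming one pair of twins is closed and an overlapping pair is open, then testing each twin pair against the third vertex to obtain contradictory conclusions about a single adjacency. The difference is only presentational: the paper treats transitivity within a fixed type as immediate (having the same open, resp.\ closed, neighbourhood is literally an equality of sets) and only needs to rule out the mixed case, whereas you prove transitivity of the combined relation directly and correctly isolate the $x=v$ case as the point where the type-matching lemma is indispensable.
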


\begin{proof}
We have to show that it is not possible for $u$ and $v$ to be open twins and
$v$ and $w$ closed twins. If this occurs, then $v$ is joined to $w$ but not $u$.
Now $u$ and $v$ are twins so $u$ is not joined to $w$; but $v$ and $w$ are
twins, so $w$ is joined to $u$, a contradiction.

Clearly being equal or open twins is an equivalence relation, and being equal
or open twins is an equivalence relation; and equivalence classes of size
greater than $1$ are disjoint. \qed
\end{proof}

\begin{theorem}
Let $\Gamma$ be a connected graph on $n$ vertices with the property that every
vertex in $\Gamma$ has a twin. Then $b(\Gamma)=\dim(\Gamma)=\adim(\Gamma)=n-r$,
where $r$ is the number of equivalence classes under the twin relation.
\end{theorem}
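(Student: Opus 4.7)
The plan is to use the chain $b(\Gamma)\le\dim(\Gamma)\le\adim(\Gamma)$ already established, and sandwich everything between the same lower and upper bound $n-r$. Concretely, I would prove $b(\Gamma)\ge n-r$ (lower bound on the base size) and $\adim(\Gamma)\le n-r$ (upper bound on adjacency dimension), which then collapses the whole chain.

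For the lower bound, I would first verify the folklore fact that for any two twins $u,u'$ (open or closed) the transposition $\sigma=(u\;u')$ is an automorphism: edges not involving $u$ or $u'$ are fixed setwise, the edge $\{u,u'\}$ (if present) is fixed, and for any third vertex $b$ the definition of twins gives $\{u,b\}\in E \iff \{u',b\}\in E$. Thus within each twin equivalence class $\tau_i$ of size $k_i\ge 2$, the symmetric group $\mathrm{Sym}(\tau_i)$ embeds in $\mathrm{Aut}(\Gamma)$. A base must kill these pointwise, so it must contain at least $k_i-1$ vertices from each class. Summing gives $b(\Gamma)\ge\sum_i(k_i-1)=n-r$.

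For the upper bound, let $W$ be obtained by deleting exactly one representative $v_i$ from each twin class, so $|W|=n-r$. I would argue $W$ is an adjacency resolving set. The two key observations are: (i) vertices in $W$ have a $0$-entry (representing themselves) in the $W$-adjacency vector while vertices outside $W$ do not, so in-$W$ versus out-of-$W$ vectors automatically differ, and two vertices in $W$ have their $0$-entries in distinct coordinates; (ii) two twins $u,u'$ in the same class have identical adjacencies to every vertex outside that class, which follows from $N(u)=N(u')$ in the open case and from $N[u]=N[u']$ (and the fact that excluded vertices are neither $u$ nor $u'$) in the closed case. It remains to separate two distinct representatives $v_i,v_j\notin W$. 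Since $v_i,v_j$ lie in different twin classes, they are not twins, so some vertex $u\ne v_i,v_j$ is adjacent to exactly one of them. If $u\in W$, we are done; otherwise $u$ is itself a representative, and (since every vertex has a twin) there is some $u'\ne u$ in the twin class of $u$ with $u'\in W$. That class contains neither $v_i$ nor $v_j$ (else $v_i$ or $v_j$ would be twin to $u$, contradicting the uniqueness of representatives), so observation (ii) applies and $u'$ distinguishes $v_i$ from $v_j$ just as $u$ does.

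The main obstacle, such as it is, is bookkeeping in step (ii): making sure the argument works uniformly for open and closed twins and verifying that the substitute $u'\in W$ really lies outside the classes of $v_i$ and $v_j$, so that its adjacency to $v_i,v_j$ mirrors that of $u$. Once that is set down, the two bounds together with the given inequality $b(\Gamma)\le\dim(\Gamma)\le\adim(\Gamma)$ force $b(\Gamma)=\dim(\Gamma)=\adim(\Gamma)=n-r$, completing the proof.
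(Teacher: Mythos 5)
Your proposal is correct and follows essentially the same route as the paper: the lower bound $b(\Gamma)\ge n-r$ via twin transpositions being automorphisms, and the upper bound by showing that the complement of a transversal of the twin classes is an adjacency resolving set, then collapsing the chain $b\le\dim\le\adim$. Your verification of the resolving property is the contrapositive of the paper's (you directly separate non-twins by replacing a distinguishing vertex with a twin inside $W$, whereas the paper shows two unresolved vertices would have to be twins), but the construction and the logic are the same.
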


\begin{proof}
If two vertices $u$ and $v$ are twins, then they can be interchanged by an
automorphism fixing all other vertices. So a base must contain one of them.
It follows that a base must contain all but at most one vertex in each twin
equivalence class, so that $b(\Gamma)\ge n-r$.

Now let $S$ be a set consisting of all but one vertex from each twin
equivalence class. We show that $S$ is an adjacency resolving set. Suppose
that $u$ and $v$ are two vertices not in $S$ which have the same neighbours
in $S$. If $u$ is joined to $w$, then it is joined to every vertex in the twin
class of $w$. Since $S$ meets every twin class, we see that $u$ and $v$ have
the same neighbours, and so are twins. But this contradicts the assumption 
that $S$ contains all but one vertex from every twin class. So our claim is
proved, and we conclude that $\adim(S)\le n-r$.

Hence
\[n-r\le b(\Gamma)\le\dim(\Gamma)\le\adim(\Gamma)=n-r,\]
so we have equality throughout. \qed
\end{proof}

As an application, we prove a stronger version of Theorem~\ref{3.1}.

\begin{proposition}
If the character degree graph $\Delta(G)$ of a solvable group of even order
$n>4$ is regular, then its base size, dimension and adjacency dimension are
both equal to $n/2$.	
\end{proposition}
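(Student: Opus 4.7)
The plan is to combine Theorem~\ref{2.2} with the twin theorem proved just above the proposition, so that the proposition becomes almost immediate once the structure of $\Delta(G)$ is extracted. Write $\Gamma := \Delta(G)$. Inheriting the "non-complete" hypothesis from Theorem~\ref{3.1} (the case $\Gamma=K_n$ is excluded anyway since then $\dim(\Gamma)=n-1\ne n/2$), Theorem~\ref{2.2} forces $\Gamma$ to be $(n-2)$-regular, so the complement $\bar\Gamma$ is $1$-regular. Because $n$ is even, $\bar\Gamma$ is a perfect matching with exactly $n/2$ edges; equivalently, $V(\Gamma)$ splits into $n/2$ unordered pairs $\{v,v'\}$, where $v'$ is the unique non-neighbour of $v$ in $\Gamma$.

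The next step is to identify these $n/2$ pairs as the twin equivalence classes of $\Gamma$. Fix one such pair $\{v,v'\}$. Since $v$ and $v'$ are joined to everything except each other, $N_\Gamma(v)=V(\Gamma)\setminus\{v,v'\}=N_\Gamma(v')$, so $v$ and $v'$ are open twins. Conversely, if $u$ and $w$ lie in different pairs of the matching, then $u$ is adjacent to $w$ in $\Gamma$ but not to its partner $u'$, whereas $w$ is adjacent to $u'$; hence $N_\Gamma(u)\ne N_\Gamma(w)$ (and likewise for closed neighbourhoods), so $u$ and $w$ are not twins. Thus every vertex has a twin, and the twin relation has exactly $r=n/2$ classes.

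With the hypotheses of the preceding twin theorem verified, I would simply invoke it to conclude
\[ b(\Gamma)=\dim(\Gamma)=\adim(\Gamma)=n-r=\tfrac{n}{2}, \]
which proves the proposition and, as a bonus, gives the postponed proof of Theorem~\ref{3.1}. I do not expect any real obstacle: the whole argument rests on the single structural observation that an $(n-2)$-regular graph of even order consists of $n/2$ pairs of open twins, and the preceding theorem then does all the work. The only points that deserve a careful sentence in the write-up are (i) that the non-complete hypothesis is tacitly carried over from Theorem~\ref{3.1}, and (ii) that parity of $n$ is used precisely to guarantee $\bar\Gamma$ is a perfect matching rather than just a $1$-regular graph (which for odd $n$ cannot exist, so the even-$n$ hypothesis is actually the natural one).
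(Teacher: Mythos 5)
Your proposal is correct and follows exactly the paper's argument: the paper's (one-sentence) proof likewise invokes Theorem~\ref{2.2} to see that the complement of $\Delta(G)$ is a $1$-factor, so that every vertex has a twin and there are $n/2$ twin classes, and then applies the preceding theorem on graphs in which every vertex has a twin. Your write-up merely spells out the verification that the matched pairs are precisely the twin classes, and correctly flags the tacit non-completeness hypothesis.
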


For its complement is a $1$-factor, so every vertex has a twin, and there are
$n/2$ twin classes.

The graphs shown in Figure \ref{f3} has base size~$3$ (the 
automorphism group is $S_3\times S_2$, and $\{v_1,v_2,v_5\}$ is a base),
equal to its metric dimension. The graphs in Figures~\ref{f2} and \ref{f4}
have automorphism group of order $2$, and so base size~$1$, smaller than
their metric dimensions (which are $3$ in both cases). 

Examples in~\cite{BC} show that, for arbitrary graphs, the gap between base
size and metric dimension can be arbitrarily large.

\begin{question}
Let $G$ be a finite solvable group and $\Delta(G)$ its character degree graph.
Is $\dim(\Delta(G))$ bounded above by a function of $b(\Delta(G))$?
\end{question}

We will prove this in the case of groups of Fitting height~$2$.

\begin{theorem}
Let $G$ be a finite solvable group of Fitting height~$2$, and $\Delta(G)$ its
character degree graph. Then $b(\Delta(G)=\dim(\Delta(G))=\adim(\Delta(G))$.
\end{theorem}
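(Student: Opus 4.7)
The plan is to prove the three quantities are equal by combining the general chain $b(\Delta(G))\le\dim(\Delta(G))\le\adim(\Delta(G))$ with two further inequalities coming from the Lewis-graph structure of $\Delta(G)$. First I would invoke Lewis's Theorem~\ref{2.4} to decompose $V(\Delta(G))=U\cup X\cup Y$ as in Section~\ref{FitHei}, so that $\overline{\Delta(G)}$ consists of $n_1$ isolated vertices together with $n_3$ disjoint stars $S_1,\dots,S_{n_3}$.

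The first reduction is $\dim(\Delta(G))=\adim(\Delta(G))$, for which I would verify that $\Delta(G)$ has diameter at most~$2$. Any non-adjacent pair consists of some $x\in N_i\subseteq X$ and its unique non-neighbour $y_i\in Y$, and a common neighbour exists either in $U$ (when $n_1\ge 1$) or in $Y\setminus\{y_i\}$ (which is non-empty in a connected Lewis graph with $n_1=0$, since otherwise $y_1$ would be isolated). In a diameter-$2$ graph every distance between distinct vertices lies in $\{1,2\}$, so a set $W$ is a metric resolving set if and only if it is an adjacency resolving set; hence $\dim(\Delta(G))=\adim(\Delta(G))$.

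The second reduction is $b(\Delta(G))\ge\dim(\Delta(G))$, for which the key input is the twin-class analysis from the proof of Theorem~\ref{3.4}. The full list of twin equivalence classes of size greater than~$1$ in $\Delta(G)$ is: the set $U$ (when $n_1\ge 2$), each $N_i$ with $|N_i|\ge 2$, and each two-vertex star $\{x_i,y_i\}$ of the complement. Since swapping two twins is a non-trivial automorphism, every base must contain all but one vertex from each such class, and summing the contributions gives
\[
b(\Delta(G))\ \ge\ \begin{cases} n_1+n_2-s-1 & \text{if } n_1\ge 1,\\ n_2-s & \text{if } n_1=0, \end{cases}
\]
which by Theorem~\ref{3.4} is exactly $\dim(\Delta(G))$. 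Combined with the general inequality $b\le\dim$, this yields $b(\Delta(G))=\dim(\Delta(G))$, and together with the previous step we conclude $b(\Delta(G))=\dim(\Delta(G))=\adim(\Delta(G))$.

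The main obstacle is a careful enumeration of twin classes: one must verify that no additional pairs of vertices are twins (for instance, that a vertex $y_i$ with $|N_i|\ge 2$ is a singleton twin class, and that no vertex of $X$ is twin with a vertex of $U$ or a vertex of $Y$), so that the twin lower bound on $b$ is an exact match with the formula of Theorem~\ref{3.4} rather than strictly smaller.
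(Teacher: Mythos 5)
Your proposal is correct and follows essentially the same route as the paper: the diameter-$2$ property of a Lewis graph gives $\dim(\Delta(G))=\adim(\Delta(G))$, and the twin-class lower bound from the proof of Theorem~\ref{3.4} applies verbatim to bases, forcing $b(\Delta(G))\ge\dim(\Delta(G))$. The only remark worth adding is that the ``main obstacle'' you identify is not actually needed: since $b\le\dim$ always holds, it suffices that the sets you list consist of mutual twins, because any additional twins would only raise a lower bound that already meets the upper bound.
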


\begin{proof}
We computed the dimension in Theorem~\ref{3.4}. Since the diameter is $2$
by the theorem of Lewis, this is equal to the adjacency dimension. Also our
remark about twins in the proof shows that a base must contain at least one
vertex from each equivalence class of twins; so the set constructed in that
theorem is also a base. \qed
\end{proof}

\end{document}